\theoremstyle{definition}
\newtheorem{Def}{Definition}[section]
\newtheorem{nota}[Def]{Notation}
\newtheorem{es}[Def]{Example}
\newtheorem{as}[Def]{Assumption}
\theoremstyle{remark}
\newtheorem{obs}[Def]{Remark}
\theoremstyle{plain}
\newtheorem{prop}[Def]{Proposition}
\newtheorem{lema}[Def]{Lemma}
\newtheorem{cor}[Def]{Corollary}
\newtheorem{teo}[Def]{Theorem}
\newtheorem*{teo:char}{Theorem~\ref{char-single}}
\newtheorem*{teo:char2}{Theorem~\ref{birk-theo}}
\newcommand{\bb}{\mathbbm}
\newcommand{\bo}{\mathbf}
\newcommand{\A}{{\mathcal A}}
\newcommand{\C}{{\mathcal C}}
\newcommand{\D}{{\mathcal D}}
\newcommand{\E}{{\mathcal E}}
\newcommand{\G}{{\mathcal G}}
\newcommand{\I}{{\mathcal I}}
\newcommand{\K}{{\mathcal K}}
\newcommand{\M}{{\mathcal M}}
\newcommand{\T}{{\mathcal T}}
\newcommand{\V}{{\mathcal V}}
\newcommand\Cat{\operatorname{\bo{Cat}}}
\newcommand\Set{\operatorname{\bo{Set}}}
\newcommand\Met{\operatorname{\bo{Met}}}
\newcommand\Pos{\operatorname{\bo{Pos}}}
\newcommand{\LL}{{\mathbbm L}}
\newcommand{\EE}{{\mathbbm E}}
\newcommand\Str{\operatorname{\textnormal Str}}
\newcommand\Mod{\operatorname{\textnormal Mod}}
\newcommand{\tx}{\textnormal}
\newcommand{\tsum}{\textstyle\sum}
\newcommand{\dw}{\downarrow\kern-3pt}
\newcommand{\op}{^\textnormal{op}}
\newcommand{\colim}{\operatornamewithlimits{colim}}
\newcommand{\changeoperator}[1]{%
	\csletcs{#1@saved}{#1@}%
	\csdef{#1@}{\changed@operator{#1}}%
}
\newcommand{\changed@operator}[1]{%
	\mathop{%
		\mathchoice{\textstyle\csuse{#1@saved}}
		{\csuse{#1@saved}}
		{\csuse{#1@saved}}
		{\csuse{#1@saved}}%
	}%
}
\def\@tocline#1#2#3#4#5#6#7{\relax
	\ifnum #1>\c@tocdepth 
	\else
	\par \addpenalty\@secpenalty\addvspace{#2}%
	\begingroup \hyphenpenalty\@M
	\@ifempty{#4}{%
		\@tempdima\csname r@tocindent\number#1\endcsname\relax
	}{%
		\@tempdima#4\relax
	}%
	\parindent\z@ \leftskip#3\relax \advance\leftskip\@tempdima\relax
	\rightskip\@pnumwidth plus4em \parfillskip-\@pnumwidth
	#5\leavevmode\hskip-\@tempdima
	\ifcase #1
	\or\or \hskip 1em \or \hskip 2em \else \hskip 3em \fi%
	#6\nobreak\relax
	\hfill\hbox to\@pnumwidth{\@tocpagenum{#7}}\par
	\nobreak
	\endgroup
	\fi}
\title[On enriched terms and 2-categorical universal algebra]{On enriched terms and 2-categorical\\ universal algebra}
\author{Giacomo Tendas}
\address{Department of Mathematics, University of Manchester, Faculty of Science and Engineering, Alan Turing Building, M13 9PL Manchester, UK}
\email{giacomo.tendas@manchester.ac.uk}
\date{\today}
\thanks{The author acknowledges with gratitude the support of the EPSRC postdoctoral fellowship EP/X027139/1}
\begin{document}
	
\begin{abstract}
 	We introduce a new notion of recursively generated {\em enriched term} which generalizes the one studied in joint work with Rosick\'y. These new terms come together with a notion of {\em term-interpretability}, which recovers the same type of interpretability that has been considered for enrichment over posets, metric spaces, and $\omega$-complete posets. As an application of this, we specialize to the 2-categorical case by considering 2-dimensional terms and 2-dimensional equational theories. In this context we also give an explicit description of free structures and prove a 2-dimensional Birkhoff variety theorem. 
\end{abstract}	
	
\maketitle

\setcounter{tocdepth}{1}
\tableofcontents

\section{Introduction}

Several enriched approaches to Universal Algebra have been presented by different authors in recent times. Both from a purely categorical point of view, which is a direct enriched analogue of Lawvere theories~\cite{Law73:articolo} (see~\cite{power1999enriched,BG,LP23}), and from a more logical one, which involves enriched notions of terms and equations (see~\cite{fiore2008term,LP23dg}). With respect to this latter approach the theory was still incomplete, with several versions of universal algebra being developed for specific bases of enrichment, and the notion of term appearing in the literature not being recursively generated as in the ordinary setting. The recent paper~\cite{RT23EUA}, in joint work with J. Rosick\'y, aimed to unify the framework and to provide a more tractable notion of term.

In fact, in~\cite{RT23EUA} we introduced enriched notions of languages and recursively generated enriched terms which allowed to present $\V$-categories of models of $\lambda$-ary enriched monads as $\V$-categories of models of equational theories defined by term equalities. This framework was later used in~\cite{RT25} to define atomic formulas and a regular fragment of enriched logic. 

Every enriched term $\tau$, as every function symbol of an enriched language, comes together with {\em input} and {\em output} arities which are objects of $\V$: we write $\tau:(X,Y)$ to mean that $\tau$ has input arity $X$ and output arity $Y$. On an $\LL$-structure $A$ such a term $\tau$ is interpreted as a morphism
$$ \tau_A\colon A^X\longrightarrow A^Y $$
in $\V$, where $A^{(-)}:=[-,A]$ denotes the internal hom.


While passing from (input) arities being natural numbers (as in ordinary universal algebra) to them being objects of $\V$ might seem natural, the fact that each function symbol (or term) come also with an output arity is somewhat dissatisfying. Nonetheless, this assumption is needed since in general the unit $I$ does not generate $\V_0$ under colimits (like in $\Set$). One of our purposes is to modify the notions of~\cite{RT23EUA} so that function symbols and terms have output arities in a generator $\G\subseteq\V_0$. This will allow us to deal with much simpler terms, and to recover several works in the literature within our framework.

More in detail, fixed a regular cardinal $\lambda$, a generator $\G$ of $\V_0$, and a suitable set $\Gamma$ of ``generating $\lambda$-ary epimorphisms'' (Assumption~\ref{assu}) we introduce the notion of $\Gamma$-ary language and term so that:\begin{enumerate}[leftmargin=1cm]
	\item[(a)] the output arities of both $\Gamma$-ary function symbols and terms are allowed to vary among $\G$;
	\item[(b)] the substitution rule allows to ``glue'' $\Gamma$-ary terms together, subject to a certain covering property specified by $\Gamma$.
\end{enumerate}
What we mean exactly by (a) and (b) above will be explained in detail in Section~\ref{structures-and-terms}. 

As mentioned above, the goal of point (a) is to make our enriched languages and theories more elementary. For instance, when the unit $I$ of $\V$ is a generator this means that we can consider the output arities in our languages and equations to be trivial like in the ordinary $\bo{Set}$-enriched case. This allows us to recover exactly the same notion of terms considered interdependently for enrichment over posets~\cite{adamek2021finitary}, metric spaces~\cite{mardare2016quantitative}, and $\omega$-complete posets~\cite{adamek1985birkhoff}. Moreover, for the case of $\V=\Cat$, which we study in detail, condition (a) implies that it will be enough to consider 1-dimensional (with output arity $1$) and 2-dimensional (with output arity $\bb 2$) function symbols and terms.

Point (b), instead, makes this notion of term more ``malleable'' than the one considered in~\cite{RT23EUA} and in particular will lead to the introduction of {\em term interpretability}, meaning that a given term may not be interpretable in {\em all} structures. To better understand what this means, let us give an example in the 2-categorical setting:

\begin{es}
	 Fix $\V=\Cat$; here we can consider two 2-dimensional terms $\sigma$ and $\tau$ of (input) arity $X\in\Cat_f$ (and output arity $\bb 2$) over a language $\LL$. On an $\LL$-structure $A$, these are interpreted as maps $\sigma_A,\tau_A\colon A^X\to A^\bb 2$, or equivalently as natural transformations
	$$ \sigma_A\colon(\sigma_0)_A\Rightarrow(\sigma_1)_A\colon A^X\to A\ \ \ \ \ \  \tx{ and }\ \ \ \ \ \ \tau_A\colon(\tau_0)_A\Rightarrow(\tau_1)_A\colon A^X\to A. $$
	The new substitution rule of point (b) allows us to form a new 2-dimensional $X$-ary term $\sigma\circ\tau$, which is not guaranteed to be interpretable over all $\LL$-structures. In fact, $\sigma\circ\tau$ is interpretable over $A$ if and only if $(\tau_1)_A=(\sigma_0)_A$ and in that case its interpretation is given by the composite $\sigma_A\circ\tau_A$ of the two natural transformations. 
\end{es}

For the bases of enrichment $\Pos$, $\Met$, and $\omega$-$\bo{CPO}$, a notion of interpretability has been considered in the literature before (see~\cite{adamek2021finitary,mardare2016quantitative,adamek1985birkhoff}); we prove that it coincides with the one that we introduce in Examples~\ref{Pos},\ref{Met}, and~\ref{CPO}.

Given this new notion of enriched term, we define $\Gamma$-ary equational theories and prove that these are at least as expressive as those introduced in~\cite{RT23EUA}, and we characterize the $\V$-categories of models of $\Gamma$-ary equational theories as those arising as $\V$-categories of algebras of a $\lambda$-ary monad on $\V$:

\begin{teo:char}
	The following are equivalent for a $\V$-category $\K$: \begin{enumerate}[leftmargin=1cm]
		\item $\K\simeq\Mod(\EE)$ for a $\Gamma$-ary equational theory $\EE$;
		\item $\K\simeq\tx{Alg}(T)$ for a $\lambda$-ary monad $T$ on $\V$;
		\item $\K$ is cocomplete and has a $\lambda$-presentable and $\V$-projective strong generator $G\in\K$;
		\item $\K\simeq\lambda\tx{-Pw}(\T,\V)$ is equivalent to the $\V$-category of $\V$-functors preserving $\lambda$-small powers, for some $\V_\lambda$-theory $\T$.
	\end{enumerate}
\end{teo:char}

In Section~\ref{souns-section} we generalize this to the case where $\lambda$-filtered colimits are replaced by $\Phi$-flat colimits, built from a sound class $\Phi$ in the sense of~\cite{ABLR02:articolo}. In particular we characterize $\V$-categories of algebras of strongly finitary monads over a cartesian closed base ($\Phi$ being the class for finite products).

We conclude the paper by treating the 2-categorical case (Section~\ref{2-cat-sect}); for simplicity we restrict to the finitary setting. In Definition~\ref{cat-term} we spell out how 2-categorical terms are defined, and then outline a few term-constructions to show how the more general substitution rule works (Remark~\ref{sample-terms}). As an example of equational theory, we give equations that present the 2-category of categories with chosen (co)limits of a given shape (Example~\ref{limits}).

As stated before, given a language $\LL$ and a finitely presentable arity $X\in\Cat_f$ we will have a notion of 1-dimensional and 2-dimensional $X$-ary terms. These, as we shall see in Theorem~\ref{free-structures-teo}, describe (up to a certain equivalence relation) the objects and morphisms of the free $\LL$-structure $FX\in\Str(\LL)$ on $X$; that is, the value at $X$ of the left adjoint to the forgetful functor
$$ U\colon\Str(\LL)\longrightarrow\Cat. $$
Such an explicit construction of free structures will allow us to prove a general Birkhoff variety theorem for 2-dimensional universal algebra:
\begin{teo:char2}
	Let $\V=\Cat$ and $\LL$ be a finitary language. The following are equivalent for a full subcategory $\A$ of $\Str(\LL)$:\begin{enumerate}[leftmargin=1cm]
		\item $\A\cong\Mod(\EE)$, for some finitary equational theory $\EE$ on $\LL$;
		\item $\A$ is closed un $\Str(\LL)$ under products, powers, strong subobjects, $\V$-split quotients, and filtered colimits.
	\end{enumerate}
\end{teo:char2}
\noindent It remains open whether this and a similar description of free structures can be obtained in a more general enriched setting (which includes the one above); this would improve the results of~\cite[Section~6]{RT23EUA}.

\section{Background}\label{back}

We fix as base of enrichment a symmetric monoidal closed category $\V=(\V_0,\otimes,I)$ which is complete and cocomplete, and we follow the notation of Kelly~\cite{Kel82:libro} for matters about enrichment over $\V$. The internal hom of $\V$ is usually denoted by $[-,-]$; however, when talking about $\LL$-structures we will denote the internal hom as follows
$$A^X:=[X,A];$$
this is to give a more intuitive interpretation of arities and function symbols.

We assume $\V$ to be locally $\lambda$-presentable as a closed category~\cite{Kel82:articolo}, for some fixed regular cardinal $\lambda$. This means that $\V_0$ is locally $\lambda$-presentable and the full subcategory $(\V_0)_\lambda$ spanned by the $\lambda$-presentable objects is closed under the monoidal structure of $\V_0$. Following \cite{Kel82:articolo}, we say that a $\V$-category $\K$ is {\em locally $\lambda$-presentable} if it is cocomplete (all conical colimits and copowers exist) and has a strong generator $\G$ made of $\lambda$-presentable objects (that is, $\K(G,-)\colon \K\to\V$ preserves $\lambda$-filtered colimits for any $G\in\G$, and they jointly reflect isomorphisms). 

Next we recall the notions of language and structure introduced in \cite{RT23EUA}.

\begin{Def}[\cite{RT23EUA}]\label{language}
	A (single-sorted) {\em language} $\LL$ over $\V$ is the data of a set of function symbols $f\colon(X,Y)$ whose arities $X$ and $Y$ are objects of $\V$. The language is called $\lambda$-ary if the input and output arities of all function symbols lie in $\V_\lambda$.
\end{Def}

\begin{Def}[\cite{RT23EUA}]
	Given a language $\LL$, an {\em $\LL$-structure} is the data of an object $A\in\V$ together with a morphism $$f_A\colon A^X\to A^Y$$ in $\V$ for any function symbol $f\colon(X,Y)$ in $\LL$.
	
	A {\em morphism of $\LL$-structures} $h\colon A\to B$ is the data of a map $h\colon A\to B$ in $\V$ making the following square commute
	\begin{center}
		\begin{tikzpicture}[baseline=(current  bounding  box.south), scale=2]
			
			\node (a0) at (0,0.8) {$A^X$};
			\node (b0) at (1,0.8) {$B^X$};
			\node (c0) at (0,0) {$A^Y$};
			\node (d0) at (1,0) {$B^Y$};
			
			\path[font=\scriptsize]
			
			(a0) edge [->] node [above] {$h^X$} (b0)
			(a0) edge [->] node [left] {$f_A$} (c0)
			(b0) edge [->] node [right] {$f_B$} (d0)
			(c0) edge [->] node [below] {$h^Y$} (d0);
		\end{tikzpicture}	
	\end{center} 
	for any $f\colon(X,Y)$ in $\LL$.
\end{Def}

The $\V$-category $\Str(\LL)$ of $\LL$-structured is defined as in \cite[Definition~3.3]{RT23EUA}; this has $\LL$-structures as objects, morphisms of $\LL$-structures as arrows, and comes together with a forgetful $\V$-functor $U\colon \Str(\LL)\to \V$, sending an $\LL$-structure to its underlying object, which has a left adjoint $F$. Note that (by~\cite[Lemma~5.5]{tendas2024more}) these three properties univocally determine the $\V$-category $\Str(\LL)$.

\section{Yet another notion of term}\label{structures-and-terms}

In this section we introduce the new notion of term which generalizes that of~\cite{RT23EUA}; this will involve the choice of a generator of $\V_0$ and of a certain set of epimorphisms $\Gamma$.

\begin{as}\label{assu}
	Given $\V$ as in the Section~\ref{back} above, we fix a generator $\G$ of $\V_0$ consisting of $\lambda$-presentable objects. In addition we fix a class $\Gamma$ of maps in $\V$ of the form 
	$$ e\colon\sum_{j\in J} G_j\twoheadrightarrow Y$$
	where:\begin{enumerate}[leftmargin=1cm]
		\item every such map is an epimorphism, $J$ is $\lambda$-small, and every $G_j$ lies in $\G$;
		\item every $Y\in\V_\lambda$ is the codomain of some map in $\Gamma$;
		\item for any $G\in\G$ the identity $1_G\colon G\to G$ lies in $\Gamma$;
		\item $\Gamma$ is closed in $\V^\bb 2$ under $\lambda$-small coproducts;
		\item given $G\in\G$, $ e\colon\sum_{j\in J} G_j\twoheadrightarrow Y$ and $e_{j}\colon \sum_{i\in I_j} G_{ij}\twoheadrightarrow G\otimes G_j$ in $\Gamma$, for any $j\in J$, then also the composite
		$$ \sum_{i\in I_j,j\in J} G_{ij}\xrightarrow{\ \sum_j e_j\ } \sum_{j\in J}G\otimes G_j\cong G\otimes\sum_{j\in J} G_j \xrightarrow{\ G\otimes e\ } G\otimes Y $$
		is in $\Gamma$.
	\end{enumerate}
\end{as}

More interesting examples will be given later, for now we just note that given a generator $\G$ there are always suitable choices of $\Gamma$:

\begin{es}
	We can always take $\Gamma$ to consist of {\em all} the epimorphisms of the requested domain and codomain. If $\G$ is a strong generator, then we can take $\Gamma$ to be all the strong epimorphisms of the relevant form. If $\G=\V_\lambda$ we can take $\Gamma$ to be just the isomorphisms between $\lambda$-presentable objects.
\end{es}

The notion of language relevant in this context is the following:

\begin{Def}
	A $\Gamma$-ary language $\LL$ over $\V$ is a $\lambda$-ary language, in the sense of Definition~\ref{language}, whose function symbols have output arities in $\G$.
\end{Def}

Over such languages, we can now introduce the notion of $\Gamma$-ary term:

\begin{Def}\label{Sigma-terms}
	Let $\LL$ be a $\Gamma$-ary language over $\V$. The class of \textit{$\Gamma$-ary $\LL$-terms} is defined recursively as follows:
	\begin{enumerate}[leftmargin=1cm]
		\item Every morphism $f\colon G\to X$, with $G\in\G$ and $X\in \V_\lambda$, is an $(X,G)$-ary term.
		\item Every function symbol $f:(X,G)$ of $\LL$ is an $(X,G)$-ary term.
		\item If $t$ is a $(X,G)$-ary term and $P$ is in $\G$, then $t^P_h$ is a $(P\otimes X,Q)$-ary term for any $h\colon Q\to P\otimes G$ appearing as one component of some $e\colon \sum Q_j\twoheadrightarrow P\otimes G$ in $\Gamma$.
		\item Consider $e\colon \textstyle\sum_{j\in J} G_j\twoheadrightarrow Y$ in $\Gamma$. Given a family  $t_J=(t_j)_{j\in J}$, where $t_j$ is an $(X,G_j)$-ary term, and $s$ a $(Y,G)$-ary term; then $s(t_{J,e})$ is a $(X,G)$-ary term.
	\end{enumerate}

\end{Def}

We interpret $\Gamma$-ary terms on an $\LL$-structure as follows:

\begin{Def}
	Let $A$ be an $\LL$-structure, then the {\em interpretability} and {\em interpretation} of $\Gamma $-ary terms over an $\LL$-structure $A$ is defined recursively as follows:
	\begin{enumerate}[leftmargin=1cm]
		\item Every morphism $f\colon G\to X$, with $G\in\G$ and $X\in \V_\lambda$, is interpretable over $A$ and its interpretation is given by $$f_A:= A^f\colon A^X\to A^G;$$
		\item Every function symbol $f:(X,G)$ of $\LL$ is interpretable over $A$ and its interpretation is given by the map $$f_A\colon A^X\to A^G$$ which is part of the $\LL$-structure on $A$;
		\item If $t$ is an $(X,G)$-ary term that is interpretable over $A$, and $P\in\G$, then $t^P_h$ is interpretable over $A$ for any $h\colon Q\to P\otimes G$ as specified, and its interpretation is given by the map 
		$$ (t^Z_h)_A\colon A^{P\otimes X}\xrightarrow{\ (t_A)^Z \ } A^{P\otimes Y}\xrightarrow{A^{h}}A^{Q}.$$
		\item Given $e\colon \tsum_{j\in J} G_j\twoheadrightarrow Y$ in $\Gamma$, if $t_{J,e}=(t_j)_{j\in J}$ is formed by $(X,G_j)$-ary terms, and $s$ is a $(Y,G)$-ary term, then 
		$s(t_{J,e})$ is interpretable over $A$ if and only if $s$ and each $t_j$ are interpretable and there exists a (necessarily unique) map $(t_{J,e})_A$ as below.
		\begin{center}
			\begin{tikzpicture}[baseline=(current  bounding  box.south), scale=2]
				
				\node (a0) at (0,0.8) {$A^X$};
				\node (b0) at (1.2,0.8) {$A^Y$};
				\node (c0) at (1.2,0) {$A^{\sum_jG_j}$};
				
				\path[font=\scriptsize]
				
				(a0) edge [dashed, ->] node [above] {$(t_{J,e})_A$} (b0)
				(a0) edge [->] node [below] {$(t_j)_A\ \ \ \ \ \ $} (c0)
				(b0) edge [>->] node [right] {$A^e$} (c0);
			\end{tikzpicture}	
		\end{center}
		In that case the interpretation of $s(t_{J,e})$ is given by the composite
		$$ s(t_{J,e})_A\colon A^X\xrightarrow{\ (t_{J,e})_A  \ } A^Y\xrightarrow{\ s_A\ } A^G.$$
	\end{enumerate}
\end{Def}

\begin{Def}\label{equivalent}
	We say that two $(X,G)$-ary terms $s,t$ are {\em equivalent}, and write $s\equiv t$, if for any $\LL$-structure $A$, $s$ is interpretable over $A$ if and only if $t$ is, and in that case their interpretation coincide.
\end{Def}

\begin{obs}
	Is there a system of deduction rules that characterizes when two terms are equivalent? If so the definition of equivalence above would be purely syntactic.
\end{obs}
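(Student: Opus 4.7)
The plan is to answer the remark affirmatively by proposing a deduction system $\sim$ on $\Gamma$-ary $\LL$-terms and proving that it coincides with the semantic equivalence $\equiv$ of Definition~\ref{equivalent}. I would generate $\sim$ as the smallest equivalence relation that is closed under congruence for each constructor of Definition~\ref{Sigma-terms}, together with three families of coherence axioms: (i) associativity of the family-substitution rule (4), combined with the unary substitutions coming from composition with $\V$-morphisms in rule (1); (ii) compatibility of the power constructor of rule (3) with the monoidal product, mediated by clauses (3)--(5) of Assumption~\ref{assu}; and (iii) a ``collapse'' axiom forcing $s(t_{J,e})$ to reduce to a single precomposition by an induced $\V$-morphism whenever every $t_j$ is itself a rule-(1) morphism $t_j\colon G_j\to X$ and the $\Gamma$-epimorphism $e\colon \sum_j G_j \twoheadrightarrow Y$ guarantees the existence of an induced $\bar t\colon Y\to X$. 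Each rule would come paired with a clause on interpretability, so that $s \sim t$ forces $s$ and $t$ to be interpretable over exactly the same $\LL$-structures.

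Soundness $\sim\,\subseteq\,\equiv$ is the easy direction, proved by induction on derivations: each axiom mirrors a commutative diagram in $\V_0$ that holds in every $\LL$-structure, because the recursive interpretation in Definition~\ref{Sigma-terms} was set up precisely to respect such diagrams.

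Completeness $\equiv\,\subseteq\,\sim$ is the substantial direction and follows a Lindenbaum--Tarski pattern. For each $X\in\V_\lambda$ I would construct a ``syntactic'' $\LL$-structure $T_X$ whose underlying $\V$-object is assembled from $\sim$-equivalence classes of $(X,G)$-ary terms as $G$ ranges over $\G$, with function symbols interpreted by formal application. One then identifies $T_X$ with the free $\LL$-structure $FX$ granted by the adjunction of Section~\ref{back}, and verifies that two $(X,G)$-ary terms $s,t$ determine the same morphism $G \to UT_X$ if and only if $s \sim t$. Given that, $s \equiv t$ implies agreement on $T_X$ in particular, hence $s \sim t$.

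The main obstacle is precisely this last identification. The conditional nature of interpretability in rule~(4) must be matched against the colimit presentation of $UT_X$, so that the $\Gamma$-covering data generates exactly the right coequalizer relations to make partial substitutions interpretable precisely when they should be. In the 2-categorical setting this amounts essentially to the forthcoming Theorem~\ref{free-structures-teo}; extending an explicit description of free structures to every $\V$ satisfying Assumption~\ref{assu}---and reconciling it with the substitution behaviour prescribed by $\Gamma$---is a non-trivial task, and is presumably why the authors pose the question as open.
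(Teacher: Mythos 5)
The statement you are addressing is not a theorem: in the paper it is a remark posing an \emph{open question}, and no proof is given or claimed. So there is no proof of record to compare your proposal against; the only meaningful question is whether your sketch actually settles the problem. It does not, and you essentially concede this yourself in your final paragraph, where the completeness direction is left contingent on an explicit description of free structures for arbitrary $\V$ satisfying Assumption~\ref{assu} --- something the paper only provides for $\V=\Cat$ (Theorem~\ref{free-structures-teo}) and explicitly flags as open in general at the end of the introduction. A proposal whose ``substantial direction'' is deferred to an unproved identification is a plan, not a proof.

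Beyond the acknowledged gap, there is a structural problem with the Lindenbaum--Tarski strategy as you set it up. Definition~\ref{equivalent} requires two things of equivalent terms: that they be interpretable over exactly the same $\LL$-structures, \emph{and} that their interpretations agree where defined. Your syntactic model $T_X$ is built from ($\sim$-classes of) terms, and if it is to be identified with $FX$ it will, as in Section~\ref{free-struct}, only carry the terms that are interpretable over \emph{every} $\LL$-structure. Testing $s\equiv t$ against $T_X$ alone can therefore at best detect equality of interpretations for everywhere-interpretable terms; it cannot detect that $s$ and $t$ have the same interpretability locus, since that is a condition quantified over all $\LL$-structures, including highly non-free ones where the orthogonality condition in rule (4) fails. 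Any completeness argument must either produce a family of ``test'' structures rich enough to separate interpretability loci, or build the deduction system so that interpretability is tracked syntactically as a judgement in its own right (closer to a partial-algebra or essentially-algebraic calculus than to classical equational logic). Neither ingredient is present in your sketch, and supplying them is precisely the difficulty that makes the remark an open question rather than an exercise.
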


\begin{obs}\label{lessrules}
	If in rule (3) the object $P\otimes G$ is still in $\G$, then it is enough to apply the rule only to $h=1_{P\otimes G}$. Indeed all the remaining terms constructed using $h\colon Q\to P\otimes G$ can be obtained in an equivalent way by applying rule (4) with $e=1_{P\otimes G}$, to $s=h$ (given by rule (1)) and $t=t^Z_{1_{P\otimes G}}$.
\end{obs}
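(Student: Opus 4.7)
The plan is to verify that the alternative construction produces a syntactically valid $(P\otimes X,Q)$-ary term whose interpretability and interpretation on any $\LL$-structure agree with those of $t^P_h$, in the sense of Definition~\ref{equivalent}.

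I would first check that every ingredient is well-defined when $P\otimes G\in\G$. Condition (3) of Assumption~\ref{assu} ensures that $1_{P\otimes G}\in\Gamma$, so rule (3) applied to $t$ with $h'=1_{P\otimes G}$ yields the $(P\otimes X,P\otimes G)$-ary term $t^P_{1_{P\otimes G}}$. The map $h\colon Q\to P\otimes G$ has $Q\in\G$ (as a coproduct summand in the domain of some map in $\Gamma$) and codomain in $\V_\lambda$, so rule (1) turns it into a $(P\otimes G,Q)$-ary term. Rule (4), applied to the singleton-index presentation $1_{P\otimes G}\in\Gamma$ with inner family $(t^P_{1_{P\otimes G}})$ and outer term $s=h$, then produces the desired $(P\otimes X,Q)$-ary term $h(t^P_{1_{P\otimes G}})$.

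Next I would compare interpretability on an $\LL$-structure $A$. Rule (3) says that $t^P_h$ is interpretable over $A$ if and only if $t$ is. For $h(t^P_{1_{P\otimes G}})$, rule (4) requires that $h$ be interpretable (which always holds by rule (1)), that $t^P_{1_{P\otimes G}}$ be interpretable (equivalent to $t$ being interpretable), and that a factorization through $A^{1_{P\otimes G}}$ exist; since this morphism is the identity, the factorization is automatic. Hence both terms are interpretable over exactly the same structures.

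Finally I would match interpretations: whenever $t$ is interpretable over $A$, unwinding rule (3) gives $(t^P_h)_A = A^h\circ(t_A)^P$, while unwinding rules (3) and (4) for the alternative term gives $h_A\circ(t_A)^P = A^h\circ(t_A)^P$ (using $h_A=A^h$ from rule (1)). The two composites coincide, proving equivalence. The whole argument is a routine unwinding of the recursive definitions; the only mildly subtle point is that the factorization required in rule (4) becomes vacuous when the covering epimorphism is an identity, so there is no substantial obstacle.
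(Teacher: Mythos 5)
Your argument is correct and is exactly the justification the remark itself sketches: you verify that $1_{P\otimes G}\in\Gamma$ via Assumption~\ref{assu}(3), that $h$ is a $(P\otimes G,Q)$-ary term by rule (1), and that rule (4) with $e=1_{P\otimes G}$ applied to $s=h$ and the single term $t^P_{1_{P\otimes G}}$ reproduces the arity, the interpretability condition (the factorization through $A^{1_{P\otimes G}}$ being vacuous), and the interpretation $A^h\circ(t_A)^P$ of $t^P_h$. This matches the paper's intended (and only sketched) proof; no gaps.
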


\begin{es}\label{set-case}
	By taking $\V=\Set$ with $\lambda=\aleph_0$, $\G=\{1\}$, and $\Gamma$ to consist of the isomorphisms, we recover the classical definition of term. Rule (1) declares variables, and says that for any finite set of variables $(x_1,\dots,x_n)$ we have $n$-ary terms that pick out one of the variables; giving $n$-ary projections. Rule (2) is standard and (3) is trivial (since $h$ is the identity on $1$). Rule (4) can be understood as the usual superposition of terms. 
	All terms are interpretable since the compatibility condition in rule (4) is always satisfied.
\end{es}

\begin{es}\label{iso-sigma}
	Consider a general $\V$, with $\G=\V_\lambda$ and $\Gamma$ given by the isomorphisms. Then, up to some redundancies generated by rule (3) (see Remark~\ref{lessrules}), we recover the notion of term from \cite{RT23EUA}. Also in this case all terms are interpretable.\\
	It is perhaps worth mentioning that interpretability, however, will not make these new $\Gamma$-ary terms suitable any more to construct other fragments of logic, for which one would like the interpretation of a given formula to always be defined. For that kind of framework the terms of~\cite{RT23EUA} remain a more suitable choice. 
\end{es}

Note that these $\Gamma$-ary terms may not be extended terms in the sense of \cite[Definition~4.4]{RT23EUA}; that is, they may not correspond to morphisms $FG\to FX$ in $\Str(\LL)$. This is because the interpretation of a $\Gamma$-ary term may not be defined for every $\LL$-structure $A$, while that of an extended term always is.

\begin{es}\label{concrete-case}
	Assume that the unit $I$ of $\V_0$ is a generator, so that $$U:=\V_0(I,-)\colon\V_0\to\Set$$ is conservative. Assume moreover that $U$ restricts to $U_\lambda\colon \V_\lambda\to \Set_\lambda$; then we can choose $\G=\{I\}$ and $\Gamma$ to consist of those maps $e\colon \textstyle\sum_S I\to X$ for which $Uf$ is bijective (these are epimorphisms since $I$ is a generator). \\
	In this context, function symbols and terms have only input-arities (that is, the output arities are trivial), rule (3) in the definition of $\Gamma$-term becomes redundant (Remark~\ref{lessrules}), and rule (4) is the usual superposition. Moreover, since for any $X\in\V$, to give a map $I\to X$ in $\V$ is the same as to give an element $x\colon 1\to UX$, it follows that in rule (1) a map $f\colon I\to X$ is simply a ``variable''  $x\in UX$. 
	
	Now, given a $\Gamma$-ary language $\LL$, let $\LL_0$ be the ordinary language which has the same function symbols as $\LL$ but whose arities are obtained by applying $U$: if $f$ is $X$-ary in $\LL$ then it is $UX$-ary in $\LL_0$, where we know that $UX$ is $\lambda$-small by hypothesis.
	
	The arguments above show that a $\Gamma$-ary $\LL$-term is the same as an ordinary term over $\LL_0$ (Example~\ref{set-case}). Nonetheless, while all $\LL_0$-terms are interpretable in a $\LL_0$-structure, not all $\Gamma$-ary $\LL$-terms are interpretable. In fact, when taking $\V$ to be any of the monoidal closed categories $\Pos$ of posets, $\omega$-$\bo{CPO}$ of posets with joints of $\omega$-chains, and $\Met$ of metric spaces, we recover exactly the notion of interpretability of~\cite{adamek2021finitary, adamek1985birkhoff, mardare2016quantitative}.
\end{es}

In the next two propositions we show how to construct other kinds of terms from our four rules. These will be useful in Section~\ref{equational-theories} below.

\begin{prop}\label{precomp-term}
	 If $s$ is a $(X,G)$-ary term and $h\colon X\to Y$ is a morphism in $\V_\lambda$, then there is a $(Y,G)$-ary term $s(h)$ which is interpretable in an $\LL$-structure $A$ if and only if $s$ is interpretable in $A$, and in that case $s(h)_A$ is given by the composite
	 $$s(h)_A \colon A^{Y}\xrightarrow{\ A^h \ } A^X\xrightarrow{\ s_A\ } A^G.$$
\end{prop}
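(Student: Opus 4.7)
The plan is to build $s(h)$ by a single application of rule (4) of Definition~\ref{Sigma-terms}, using rule (1) to supply the ``inner'' family of terms that implements precomposition with $h$.

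First, since $X\in\V_\lambda$, Assumption~\ref{assu}(2) provides some epimorphism $e\colon\sum_{j\in J} G_j\twoheadrightarrow X$ in $\Gamma$; write $e_j:=e\circ\iota_j\colon G_j\to X$ for its components. For each $j\in J$, the composite $h\circ e_j\colon G_j\to Y$ has domain in $\G$ and codomain in $\V_\lambda$, so by rule (1) it qualifies as a $(Y,G_j)$-ary term whose interpretation on any $\LL$-structure $A$ is $A^{h\circ e_j}\colon A^Y\to A^{G_j}$, always defined. Now rule (4), applied to the epi $e$, the family $(h\circ e_j)_{j\in J}$, and the $(X,G)$-ary term $s$, produces a $(Y,G)$-ary term which I define to be $s(h):=s((h\circ e_j)_{J,e})$.

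It remains to verify the stated interpretability and interpretation. By the interpretability clause of rule (4), $s(h)$ is interpretable over $A$ exactly when $s$ is interpretable (each inner term is, by rule (1)) and a fill-in $(t_{J,e})_A\colon A^Y\to A^X$ exists with $A^e\circ (t_{J,e})_A$ equal to the coproduct pairing of the maps $A^{h\circ e_j}$. The map $A^h$ itself is such a fill-in: indeed $A^e\circ A^h = A^{h\circ e}$, whose component at $G_j$ is $A^{h\circ e_j}$; and it is the unique one, because $e$ being an epimorphism in $\V$ forces $A^e=[e,A]$ to be a monomorphism. Hence $s(h)$ is interpretable over $A$ if and only if $s$ is, and in that case
$$ s(h)_A \;=\; s_A\circ (t_{J,e})_A \;=\; s_A\circ A^h, $$
as required.

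The only real subtlety is bookkeeping: keeping the variance of $A^{(-)}$ straight, and correctly matching the letters appearing in rule (4) with the data at hand (so that the ``$Y$'' of rule (4) is our $X$, its ``$X$'' is our $Y$, and its ``$G$'' is our $G$). Once these roles are aligned, the verification amounts to the routine diagram chase above, driven by the epi/mono duality between $e$ and $A^e$. Note that different choices of the epi $e$ yield equivalent terms in the sense of Definition~\ref{equivalent}, since the interpretation just computed is independent of that choice.
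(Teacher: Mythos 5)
Your construction is exactly the one the paper uses: apply rule (4) to an epimorphism $e\colon\sum_j G_j\twoheadrightarrow X$ from Assumption~\ref{assu}(2) and the family of rule-(1) terms $h\circ e_j\colon G_j\to Y$, then observe that $A^h$ is the (unique, since $A^e$ is monic) fill-in. The verification you spell out is precisely what the paper leaves as ``easy to see,'' so the proof is correct and essentially identical.
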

\begin{proof}
	Consider a morphism $ e=(e_j)_{j\in J}\colon\textstyle\sum_{j\in J} G_j\twoheadrightarrow Y$ in $\Gamma$, then for any $j\in J$ the maps $h_j:=h\circ e_j$ define $(Y,G_j)$-ary terms (by rule (1)). Set then
	$$ s(h):= s(h_{J,e}); $$
	following rule (4). It is easy to see that this satisfies the required property.
\end{proof}

The following is an alternative approach to rule (4) which is closer to usual set-theoretic superposition of terms which stacks together all the input arities, as well as to the last rule of~\cite[Definition~4.1]{RT23EUA}.

\begin{prop}\label{sum-domain}
	For any family of terms $(t_j)_{j\in J}$, where $t_j$ is $(X_j,G_j)$-ary, any $ e=(e_j)_{j\in J}\colon\textstyle\sum_{j\in J} G_j\twoheadrightarrow Y$ in $\Gamma$, and any $(Y,G)$-ary term $s$ there is a $(\tsum_{j\in J}X_j,G)$-ary term $s(\hat t_J)$ whose interpretation on an $\LL$-structure $A$ is defined if and only if all terms $s$ and $t_j$ are interpretable and there exists a (necessarily unique) map $(\hat t_{J,e})_A$ as below.
	\begin{center}
		\begin{tikzpicture}[baseline=(current  bounding  box.south), scale=2]
			
			\node (a0) at (0,0.8) {$A^{\sum_jX_j}$};
			\node (b0) at (1.2,0.8) {$A^Y$};
			\node (c0) at (1.2,0) {$A^{\sum_jG_j}$};
			
			\path[font=\scriptsize]
			
			(a0) edge [dashed, ->] node [above] {$(\hat t_{J,e})_A$} (b0)
			(a0) edge [->] node [below] {$\prod_j (t_j)_A\ \ \ \ \ \ $} (c0)
			(b0) edge [>->] node [right] {$A^e$} (c0);
		\end{tikzpicture}	
	\end{center}
	and in that case is given by the composite
	$$ s(\hat t_{J,e})_A\colon A^{\sum_jX_j}\xrightarrow{\ (\hat t_{J,e})_A \ } A^Y\xrightarrow{\ s_A\ } A^G.$$
\end{prop}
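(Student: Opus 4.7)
The plan is to reduce to rule~(4) of Definition~\ref{Sigma-terms} by first upgrading each $t_j$ to have the common input arity $\sum_{k\in J} X_k$. For each $j \in J$, let $\iota_j\colon X_j \to \sum_{k\in J} X_k$ denote the coproduct injection; this lies in $\V_\lambda$ since $\V_\lambda$ is closed under $\lambda$-small colimits. Applying Proposition~\ref{precomp-term} to $t_j$ and $\iota_j$ produces a $(\sum_k X_k, G_j)$-ary term $t_j(\iota_j)$ which is interpretable over an $\LL$-structure $A$ if and only if $t_j$ is, with interpretation given by the composite $(t_j)_A \circ A^{\iota_j}$.

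Next, I would apply rule~(4) to $s$, the given $e\in\Gamma$, and the family $(t_j(\iota_j))_{j\in J}$, whose members now share a common input arity. This produces a $(\sum_k X_k, G)$-ary term, which we take as our definition of $s(\hat t_{J,e})$.

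It then remains to verify that the interpretation matches the one in the statement. Unwinding rule~(4), $s(\hat t_{J,e})$ is interpretable over $A$ precisely when $s$ and each $t_j$ are interpretable and there exists a (necessarily unique) map $(\hat t_{J,e})_A\colon A^{\sum_j X_j} \to A^Y$ such that $A^e \circ (\hat t_{J,e})_A$ equals the map $A^{\sum_j X_j} \to A^{\sum_j G_j}$ whose $j$-th component is $(t_j(\iota_j))_A = (t_j)_A \circ A^{\iota_j}$. Under the canonical isomorphisms $A^{\sum_j X_j} \cong \prod_j A^{X_j}$ and $A^{\sum_j G_j} \cong \prod_j A^{G_j}$ (whose projections are the $A^{\iota_j}$), this is precisely $\prod_j (t_j)_A$, matching the diagram in the statement; the claimed formula $s_A \circ (\hat t_{J,e})_A$ for the interpretation is then immediate from rule~(4).

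The only step requiring any care is the identification of $\prod_j (t_j)_A$ with the map whose components are $(t_j)_A \circ A^{\iota_j}$, which is routine bookkeeping using that $A^{(-)}\colon \V\op \to \V$ sends coproducts to products; no genuine obstacles arise.
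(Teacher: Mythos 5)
Your proposal is correct and follows essentially the same route as the paper: define $t'_j := t_j(\iota_j)$ via Proposition~\ref{precomp-term} applied to the coproduct inclusions, then apply rule~(4) to the resulting family with the given $e$ and $s$. The only difference is that you spell out the bookkeeping identifying the map with components $(t_j)_A \circ A^{\iota_j}$ with $\prod_j (t_j)_A$, which the paper leaves implicit.
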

\begin{proof}
	Let $X:=\tsum_{j\in J}X_i$ and denote by $h_j\colon X_j\to X$ the coproduct inclusions; then by Proposition~\ref{precomp-term} above we have $(X,G_j)$-ary terms $t'_j:=t_j(h_j)$. It is then enough to define
	$$ s(\hat t_{J,e}):=s(t'_{J,e}) $$
	which by construction satisfies the required property.
\end{proof}

\section{$\Gamma$-ary equational theories}\label{equational-theories}

We now have all the ingredients needed to introduce the notion of $\Gamma$-ary equational theory:

\begin{Def}
	Given a $\Gamma$-ary language $\LL$, a $\Gamma$-ary equational theory $\EE$ is a set of judgements of the form:\begin{itemize}[leftmargin=0.8cm]
		\item $\dw t$, where $t$ is a $\Gamma$-ary term;
		\item $(s=t)$, where $s$ and $t$ are $\Gamma$-ary terms of the same arity. 
	\end{itemize}
	An $\LL$-structure $A$ is a model of $\EE$ if:\begin{itemize}[leftmargin=0.8cm]
		\item $t$ is interpretable over $A$, whenever $\dw t\in\EE$;
		\item $s$ and $t$ are interpretable over $A$ and $s_A=t_A$, whenever $(s=t)\in\EE$.
	\end{itemize} 
	We denote by $\Mod(\EE)$ the full subcategory of $\Str(\LL)$ spanned by the models of $\EE$. 
\end{Def}

\begin{nota}
	Given $e\colon\textstyle \sum_{j\in J} G_j\twoheadrightarrow Y$ in $\Gamma$ and terms $t_j:(X,G_j)$ we write
	$$ \dw t_{J,e} $$
	to mean $\dw e_j(t_{J,e})$, for some (or equivalently, any) $j\in J$.
\end{nota}

We see now some examples; the case of $\V=\Cat$ will be treated separately in Section~\ref{2-cat-sect}.

\begin{es}\label{Pos}
	Let $\V=\Pos$ with $\Gamma$ as in Example~\ref{concrete-case}. Given a $\Gamma$-ary language $\LL$ and $X\in\Pos_f$ of cardinality $n\in\mathbbm N$, an $X$-ary term over $\LL$ is the same as a $n$-ary term over $\LL_0$. Given two $n$-ary terms $t$ and $s$, then we can form the judgement 
	$\dw (t,s)_e$
	where $e\colon 1+1\to \bb 2$ the inclusion into $\bb 2=\{0\leq 1\}$. It follows by definition that an $\LL$-structure $A$ satisfies such judgement if and only if $$t_A(\bar a)\leq s_A(\bar a)$$ for any $\bar a\in A^n$. Therefore we recover the inequalities of \cite{adamek2021finitary}; since every judgement can be reduced to a set of inequalities as above, our $\Gamma$-ary equational theories are the same as their theories with inequalities.
\end{es}

\begin{es}\label{Met}
	Similarly, let $\V=\Met$ and $\LL$ be a $\Gamma$-ary language with $\Gamma$ as in Example~\ref{concrete-case}. Given $X\in\Met$ of cardinality $n\in\mathbbm N$, an $X$-ary term over $\LL$ is the same as a $n$-ary term over $\LL_0$. Given two $n$-ary terms $t$ and $s$ and some $\epsilon\in(0,\infty)$, then we can form the judgement 
	$\dw (t,s)_e$
	where $e\colon 1+1\to \bb 2_\epsilon$ the inclusion into the metric space having two points of distance $\epsilon$. It follows by definition that an $\LL$-structure $A$ satisfies such judgement if and only if $$d_A(t_A(\bar a), s_A(\bar a))\leq\epsilon$$ for any $\bar a\in A^n$. Therefore we recover the quantitative equations of \cite{mardare2016quantitative}. As in the previous example, every judgement can be reduced to a set of $\epsilon$-inequalities as above, making our $\Gamma$-ary equational theories the same as their quantitative equational theories.
\end{es}

\begin{es}\label{CPO}
	Let $\V=\omega$-$\bo{CPO}$ be the cartesian closed category of posets with joints of $\omega$-chains. We can still take $\G=\{1\}$, $\lambda=\aleph_1$, and consider $\Gamma$ to consist of the maps 
	$$ \sum_{j\in J}1 \to Y $$
	which are injective and dense (that is, the closure of the image under joins of $\omega$-chains, is the whole $Y$). Since $\G$ is a singleton we refer to $(X,1)$-ary terms simply as $X$-ary terms.\\ 
	In the term formation rule (4), we can take $Y:=\omega+1=\omega\cup\{\omega\}$ (where we are seeing $\omega$ as the chain of natural numbers, and $\omega\in\omega+1$ as a top element) and $e\colon \textstyle\sum_{n\in\omega}1\to \omega+1$ that picks the natural numbers in $\omega+1$. Let $\iota_\omega$ be the $(\omega+1)$-ary term corresponding to the inclusion $1\to\omega+1$ picking $\omega$. Given a family $(t_n)_{n\in\omega}$ of $X$-ary terms then we can form the new $X$-ary term
	$$ \bigvee_{n\in\omega}t_n:= \iota_\omega(t_{\omega,e}). $$
	It is easy to see that an $\LL$-structure $A$ satisfies the judgement
	$$ \dw  \bigvee_{n\in\omega}t_n$$
	if and only if for any $a\in A$ we have $(t_n)_A(a)\leq (t_{n+1})_A(a)$, for any $n\in\omega$, and in that case
	$$ (\bigvee_{n\in\omega}t_n)_A(a)=\bigvee_{n\in\omega}(t_n)_A(a) $$
	This coincides with the kind of terms and the interpretation considered in~\cite{adamek1985birkhoff}.
\end{es}

We can now move to proving the main properties of the $\V$-categories of models of $\Gamma$-ary equational theories. We begin with a technical lemma.

\begin{lema}\label{limits of terms}
	Given a weight $N\colon\C\to\V$ and a diagram $H\colon\C\to\Str(\LL)$. For any $\Gamma$-term $\tau:(X,G)$ over $\LL$, if $\tau$ is interpretable over $Hc$ for every $c\in \C$, then $\tau$ is interpretable over $\{N,H\}\in\Str(\LL)$. Moreover, in that case the following diagram 
	\begin{center}
		\begin{tikzpicture}[baseline=(current  bounding  box.south), scale=2]
			
			\node (a0) at (-0.5,0.8) {$\{N,H\}^X$};
			\node (b0) at (1.5,0.8) {$\{N,H\}^G$};
			\node (c0) at (-0.5,0) {$\{N,H(-)^X\}$};
			\node (d0) at (1.5,0) {$\{N,H(-)^G\}$};
			
			\path[font=\scriptsize]
			
			(a0) edge [->] node [above] {$\tau_{\{N,H\}}$} (b0)
			(c0) edge [->] node [below] {$\{N,\tau_{H(-)}\}$} (d0)
			(a0) edge [->] node [above] {} (c0)
			(b0) edge [->] node [above] {} (d0);
		\end{tikzpicture}	
	\end{center}
	commutes, where the vertical maps are the comparison isomorphisms.
	The same assertions hold if we replace weighted limits by $\lambda$-filtered colimits.
\end{lema}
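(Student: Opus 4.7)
The natural approach is induction on the construction of $\tau$ via the four rules of Definition~\ref{Sigma-terms}. Three facts will be used repeatedly. First, the forgetful $U\colon\Str(\LL)\to\V$ creates weighted limits and $\lambda$-filtered colimits (the latter since $\LL$ is $\lambda$-ary, with $X,G\in\V_\lambda$ for every function symbol), so the structural maps on $\{N,H\}$ and on the relevant colimits are defined precisely to make the requisite comparison diagrams commute. Second, the internal hom $[X,-]\colon\V\to\V$ preserves weighted limits, and also $\lambda$-filtered colimits provided $X\in\V_\lambda$. Third, whenever $e\colon\sum_j G_j\twoheadrightarrow Y$ is an epimorphism, $A^e=[e,A]$ is a monomorphism, and both weighted limits and $\lambda$-filtered colimits preserve monomorphisms in $\V$ (the latter is standard in locally presentable categories).

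The base cases are quick: for rule~(1), the relevant square reduces to the $\V$-naturality of the comparison isomorphism $\{N,H\}^Z\cong\{N,H(-)^Z\}$ in $Z$; for rule~(2), the map $f_{\{N,H\}}$ is by definition the unique arrow induced by the universal property from the family $\{f_{Hc}\}_{c\in\C}$. Rule~(3) is handled inductively by pasting the comparison square for $t$ with the naturality squares coming from the actions of $(-)^P$ and $[h,-]$ on the comparison isomorphism.

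The substantive case is rule~(4), where $\tau=s(t_{J,e})$. By induction, $s$ and each $t_j$ are interpretable over $\{N,H\}$ with commutative comparison squares; it remains to produce the factorization $(t_{J,e})_{\{N,H\}}$ through $\{N,H\}^e$. For each $c\in\C$ the hypothesis supplies the factorization $(t_{J,e})_{Hc}\colon Hc^X\to Hc^Y$ through the monomorphism $Hc^e$, and this collection forms a $\V$-natural transformation $H(-)^X\Rightarrow H(-)^Y$: given $\phi\colon c\to c'$ in $\C$, the two candidate arrows $Hc^X\to Hc'^Y$ agree after post-composition with the monomorphism $Hc'^e$ (by the defining equations of the factorizations and the naturality of $H\phi$), and post-composition with a monomorphism is faithful. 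Taking the weighted limit and transporting along the comparison isomorphisms yields the desired factorization, after which the comparison square for $\tau$ is assembled from that for $s$ and from the factorization just constructed. The $\lambda$-filtered colimit case is entirely parallel, with the preservation of monomorphisms under $\lambda$-filtered colimits in $\V$ being precisely what makes the factorization argument go through. The main obstacle lies entirely in rule~(4): ensuring simultaneously that the pointwise factorizations assemble into a natural transformation and that the monomorphism $A^e$ survives passage to $\{N,H\}$ (or to the colimit), so that a single factorization can be extracted on the limit/colimit. Both requirements are supplied cleanly by the mono-property of $A^e$ together with the preservation results recalled above.
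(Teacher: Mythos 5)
Your proof is correct and follows essentially the same route as the paper's: induction on the term-formation rules, with the only substantive case being rule~(4), where the pointwise factorizations $(t_{J,e})_{Hc}$ are assembled into a $\V$-natural transformation $H(-)^X\Rightarrow H(-)^Y$ using uniqueness of factorizations through the monomorphisms $Hc^e$, and then $\{N,-\}$ (resp.\ $\colim$, using that powers by $\lambda$-presentable arities commute with $\lambda$-filtered colimits) is applied. The one superfluous ingredient is your appeal to preservation of monomorphisms by weighted limits and $\lambda$-filtered colimits: the factorization on $\{N,H\}$ (or on $\colim H$) is \emph{constructed} as the limit/colimit of the pointwise ones and verified directly against the comparison maps, and the relevant map $A^e=[e,A]$ on the limit/colimit object is automatically monic because $e$ is an epimorphism, so nothing needs to "survive passage" to the (co)limit.
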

\begin{proof}
	We prove this by induction on the definition of $\tau$. If $\tau$ is as in rules (1) and (2), then it is interpretable in any $\LL$-structure. If $\tau=t^Z_h$ as in rule (3), where $t$ satisfies the inductive hypothesis; then also $\tau$ is interpretable over $\{N,H\}$ since its interpretation is obtained by taking the composite of a power of $t_{\{N,H\}}$ with $\{N,H\}^h$.
	
	It remains to consider rule (4). Suppose that $\tau=s(t_{J,e})$ and that $s$ and $t_j$, for $j\in J$, satisfy the inductive hypothesis. We need to prove that there exists a dashed arrow as below.
	\begin{center}
		\begin{tikzpicture}[baseline=(current  bounding  box.south), scale=2]
			
			\node (a0) at (-0.5,0.8) {$\{N,H\}^X$};
			\node (b0) at (1.5,0.8) {$\{N,H\}^Y$};
			\node (c0) at (1.5,0) {$\{N,H\}^{\sum_jG_j}$};
			
			\path[font=\scriptsize]
			
			(a0) edge [dashed, ->] node [above] {$(t_{J,e})_{\{N,H\}}$} (b0)
			(a0) edge [->] node [below] {$(t_j)_{\{N,H\}}\ \ \ \ \ \ $} (c0)
			(b0) edge [>->] node [right] {$\{N,H\}^e$} (c0);
		\end{tikzpicture}	
	\end{center}
 	But 
 	$$  \{N,H\}^X\cong \{N,\ H(-)^X\}$$
 	and for each $C\in\C$ we have the map 
 	$$ (t_{J,e})_{H(C)}\colon H(C)^X\to H(C)^Y $$
 	defining the interpretability of $t_{J,e}$ over $H(C)$. By uniqueness this assignment extends to a $\V$-functor $(t_{J,e})_{H(-)}\colon \C\to \V^\bb 2$. Then it is enough to define $(t_{J,e})_{\{N,H\}}:= \{N,(t_{J,e})_{H(-)}\}$.
 	The fact that $ \tau_{\{N,H\}}\cong \{N,\tau_{H(-)}\}$ now follows easily. 
 	
 	The argument for $\lambda$-filtered colimits is exactly the same: given a $\lambda$-filtered diagram $H\colon \C\to\Str(\LL)$ for which $\tau$ is interpretable over $H(C)$ for any $C\in\C$, in the situation of rule (4) we define $(t_{J,e})_{\colim H}$ as the composite
 	$$ ( \colim H)^X\xrightarrow{\ \cong\ }\colim_{C\in\C} (H(C)^X)\xrightarrow{\colim (t_{J,e})_{H(-)}} \colim_{C\in\C} (H(C)^Y)\xrightarrow{\ \rho\ } (\colim H)^Y $$
 	where we only used that powers by $X$ commute in $\V$ with such colimits ($\rho$ above is the comparison map).
\end{proof}

Next we consider those kind of subobjects and quotients that which we prove shall preserve the validity of equations and interpretations:

\begin{Def}
	We say that a monomorphism $m\colon A\rightarrowtail B$ in $\Str(\LL)$ is {\em $\Gamma$-strong} if as a morphism of $\V$ it is right orthogonal to every map of the form $e\otimes Z$ for $e\in \Gamma$ and $Z\in\V$.\\
	A morphism $e\colon B\twoheadrightarrow C$ in $\Str(\LL)$ is called $\V$-split if it is a split epimorphism in $\V$. 	
\end{Def}

\begin{prop}\label{closure}
	Let $\EE$ be a $\Gamma$-ary equational theory; then $\Mod(\EE)$ is closed in $\Str(\LL)$ under:\begin{enumerate}[leftmargin=1cm]
		\item small limits.
		\item $\lambda$-filtered colimits.
		\item $\Gamma$-strong subobjects: if $m\colon A\rightarrowtail B$ is a $\Gamma$-strong monomorphism and $B\in\Mod(\EE)$ then also $A\in\Mod(\EE)$.
		\item $\V$-split quotients: if $e\colon B\twoheadrightarrow C$ is $\V$-split and $B\in\Mod(\EE)$ then also $C\in\Mod(\EE)$.
	\end{enumerate} 
\end{prop}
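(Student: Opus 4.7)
The plan is to handle (1) and (2) directly via Lemma~\ref{limits of terms}, and to prove (3) and (4) by induction on the construction of $\Gamma$-ary terms.

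For (1) and (2), let $H\colon\C\to\Str(\LL)$ be a diagram with $Hc\in\Mod(\EE)$ for every $c\in\C$, and let $B$ denote either the weighted limit $\{N,H\}$ or a $\lambda$-filtered colimit $\colim H$. Lemma~\ref{limits of terms} gives interpretability on $B$ of every term that is interpretable pointwise; hence each $\dw t\in\EE$ is satisfied by $B$. For an equation $(s=t)\in\EE$, the same lemma identifies $s_B$ and $t_B$ with the limit (resp.\ colimit) of the maps $s_{Hc}=t_{Hc}$, so $s_B=t_B$.

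For (3), let $m\colon A\rightarrowtail B$ be $\Gamma$-strong with $B\in\Mod(\EE)$. The key reformulation is that, by the tensor-hom adjunction, orthogonality of $m$ against every $e\otimes Z$ for $e\colon\sum_j G_j\twoheadrightarrow Y$ in $\Gamma$ and $Z\in\V$ is equivalent to the square with vertices $A^Y$, $B^Y$, $A^{\sum_j G_j}$, $B^{\sum_j G_j}$ and edges $m^Y$, $A^e$, $m^{\sum_j G_j}$, $B^e$ being a pullback in $\V$. I would then prove by induction on $t\colon(X,G)$ that if $t$ is interpretable on $B$ then it is interpretable on $A$ and $m^G\circ t_A=t_B\circ m^X$. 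Rules (1)--(3) are direct from naturality of $(-)^{(-)}$. For rule (4) applied to $s(t_{J,e})$, the inductive interpretations $(t_j)_A$ together with the map $(t_{J,e})_B\circ m^X$ assemble into a commuting cone over the above pullback; its universal property produces the required lift $(t_{J,e})_A\colon A^X\to A^Y$ and simultaneously verifies the naturality identity, while postcomposition with $s_A$ handles the outer term. Closure under $\EE$ then follows: for an equation $(s=t)\in\EE$, $s_B=t_B$ gives $m^G\circ s_A=m^G\circ t_A$, hence $s_A=t_A$ since $m^G$ is a monomorphism.

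For (4), let $e\colon B\twoheadrightarrow C$ be $\V$-split with section $\sigma\colon C\to B$ in $\V$, and $B\in\Mod(\EE)$. I would prove by induction on $t\colon(X,G)$ that if $t$ is interpretable on $B$, then it is interpretable on $C$ and $t_C\circ e^X=e^G\circ t_B$. Rules (1)--(3) are direct from naturality. For rule (4) applied to $s(t_{J,e'})$ with $e'\colon\sum_j G_j\twoheadrightarrow Y$ in $\Gamma$, the natural candidate is
$$(t_{J,e'})_C\;:=\;e^Y\circ(t_{J,e'})_B\circ\sigma^X\colon C^X\to C^Y.$$
A short calculation using the inductive compatibility for each $t_j$ together with $e\circ\sigma=1_C$ shows that $C^{e'}\circ(t_{J,e'})_C=\prod_j(t_j)_C$; since $e'$ is an epimorphism in $\V$, $C^{e'}$ is a monomorphism, which both identifies this as the sought factorisation and, by a similar $C^{e'}$-cancellation, enforces the naturality identity $(t_{J,e'})_C\circ e^X=e^Y\circ(t_{J,e'})_B$. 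The main obstacle I foresee is precisely here: $\sigma$ is \emph{not} a morphism of $\LL$-structures, so one cannot transport the interpretation along it directly; the workaround is to use $\sigma$ only to produce a candidate and then rely on monicity of $C^{e'}$ to enforce the algebraic identities a posteriori. Finally, interpretability judgements transfer from $B$ to $C$ by the inductive claim, and an equation $s_B=t_B$ yields $s_C\circ e^X=t_C\circ e^X$, hence $s_C=t_C$ because $e^X$ is a split epimorphism.
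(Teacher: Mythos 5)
Your proposal is correct and follows essentially the same route as the paper: (1) and (2) via Lemma~\ref{limits of terms}, (3) and (4) by induction on term formation, with (4) using exactly the candidate $e^Y\circ(-)_B\circ\sigma^X$ and cancellation against the monomorphism $C^{e'}$ and the split epimorphism $e^X$. Your treatment of (3) merely transposes the paper's argument: where the paper passes along the tensor--hom adjunction to a lifting problem of $e\otimes A^X$ against $m$, you package the same orthogonality once and for all as the statement that the square formed by $m^Y$, $A^e$, $B^e$, $m^{\sum_j G_j}$ is a pullback — an equivalent formulation yielding the same filler.
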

\begin{proof}
	(1). Consider a weight $N\colon\C\to\V$ and a diagram $H\colon \C\to \Mod(\EE)$, and denote by $J\colon\Mod(\EE)\to\Str(\LL)$ the inclusion. We need to prove that $A:=\{N,JH\}\in\Str(\LL)$ is a model of $\EE$. 
	
	Given a judgement of the form $\dw t$ in $\EE$, by hypotheses $t$ is interpretable over $HC$ for any $C\in\C$; thus $t$ is also interpretable over $A$ by Lemma~\ref{limits of terms} above.
	
	Given an equation $(s=t)$ in $\EE$ we know that, for any $C\in\C$, the terms $s$ and $t$ are interpretable over $JH(C)$ and their interpretations coincide: $s_{JH(C)}=t_{JH(C)}$. Again by Lemma~\ref{limits of terms} it follows that $s$ and $t$ are also interpretable over $A=\{N,JH\}$ and their interpretations coincide; thus $A\in\Mod(\EE)$.
	
	(2). This is totally analogous to (1); one uses again Lemma~\ref{limits of terms} above. 
	
	(3). Let $m\colon A\to B$ a monomorphism in $\Str(\LL)$ which is right orthogonal in $\V$ with respect to every map in $\Gamma$. Assume moreover that $B\in\Mod(\EE)$; we need to prove that also $A$ is a model of $\EE$.
	
	To that end, it is enough to prove that if an $\LL$-term is interpretable over $B$ that it is also interpretable over $A$. Indeed, in that case for any equation $(s=t)$ in $\EE$ we can consider the diagram
	\begin{center}
		\begin{tikzpicture}[baseline=(current  bounding  box.south), scale=2]
			
			\node (z) at (-1,0) {$A^X$};
			\node (a) at (0,0) {$A^G$};
			\node (b) at (-1,-0.8) {$B^X$};
			\node (b1) at (0,-0.8) {$B^G$};

			\path[font=\scriptsize]
			
			(z) edge [->] node [left] {$m^X$} (b)
			(a) edge [->] node [right] {$m^G$} (b1)
			
			([yshift=-1.5pt]z.east) edge [->] node [below] {$s_A$} ([yshift=-1.5pt]a.west)
			([yshift=1.5pt]z.east) edge [->] node [above] {$t_A$} ([yshift=1.5pt]a.west)
			([yshift=-1.5pt]b.east) edge [->] node [below] {$s_B$} ([yshift=-1.5pt]b1.west)
			([yshift=1.5pt]b.east) edge [->] node [above] {$t_B$} ([yshift=1.5pt]b1.west);
		\end{tikzpicture}
	\end{center}
	where the squares with $s$ and $t$ respectively, commute. Thus, since $s_B=t_B$ and $m$ is a monomorphism, then also $s_A=t_A$. 
	
	We prove that interpretability of terms is transferred from $B$ to $A$ by induction on the rules defining terms. The fact for rules (1), (2), and (3) is trivial, since they don't change the interpretability status. Consider then $\tau:=s(t_{J,e})$ as in rule (4) and assume that $s$ and $t_j$, for $j\in J$, are interpretable over $A$ and $B$ (inductive hypothesis) and that that $\tau$ itself is interpretable over $B$. We can then form the solid part of the diagram below
	\begin{center}
		\begin{tikzpicture}[baseline=(current  bounding  box.south), scale=2]
			
			\node (a0) at (0,0.8) {$A^X$};
			\node (b0) at (1.2,0.8) {$A^Y$};
			\node (e0) at (2.4,0.8) {$A^{\sum G_j}$};
			\node (c0) at (0,0) {$B^X$};
			\node (d0) at (1.2,0) {$B^Y$};
			\node (f0) at (2.4,0) {$B^{\sum G_j}$};
			
			\path[font=\scriptsize]
			
			(a0) edge [dashed, ->] node [above] {} (b0)
			(a0) edge [->] node [left] {$m^X$} (c0)
			(b0) edge [->] node [left] {$m^Y$} (d0)
			(c0) edge [->] node [below] {} (d0)
			(b0) edge [>->] node [below] {$A^e$} (e0)
			(d0) edge [>->] node [above] {$B^e$} (f0)
			(e0) edge [->] node [right] {$m^{\sum G_j}$} (f0)
			
			(a0) edge [bend left=20, ->] node [above] {$ (t_j)_A$} (e0)
			(c0) edge [bend right=20, ->] node [below] {$ (t_j)_B$} (f0);
		\end{tikzpicture}	
	\end{center}  
	where, to conclude, we need to show the existence of the dashed arrow making the diagram commute. Transposing along the tensor-hom adjunction we obtain a commutative square in $\V$ as below.
	\begin{center}
		\begin{tikzpicture}[baseline=(current  bounding  box.south), scale=2]
			
			\node (a0) at (-1,0.8) {$(\sum G_j)\otimes A^X$};
			\node (b0) at (2,0.8) {$A$};
			\node (c0) at (-1,0) {$Y\otimes A^X$};
			\node (d0) at (0.8,0) {$Y\otimes B^X$};
			\node (e0) at (2,0) {$B$};
			
			\path[font=\scriptsize]
			
			(a0) edge [->] node [above] {$[(t_j)_A]^t$} (b0)
			(c0) edge [->] node [below] {$Y\otimes m^X$} (d0)
			(d0) edge [->] node [below] {$[(t_j)_B]^t$} (e0)
			(a0) edge [->] node [left] {$e\otimes A^X$} (c0)
			(b0) edge [->] node [right] {$m$} (e0);
		\end{tikzpicture}	
	\end{center}
	Since $m$ is right orthogonal to $e\otimes A^X$ by hypothesis, then the square above has a unique diagonal filler $Y\otimes A^X\to A$, which shows the interpretability of $\tau$ over $A$. 
	
	(4). Consider a $\V$-split morphism $e\colon B\twoheadrightarrow C$ in $\Str(\LL)$ with $B\in\Mod(\EE)$; thus there exists $r\colon C\to B$ in $\V$ with $e\circ r=1_C$. Given any judgement of the form $\dw t$ in $\EE$, for $t:(X,G)$, we need to prove that $t$ is interpretable over $C$: an easy calculation (arguing by induction on the construction of $t$) shows that that is indeed the case and $t_C$ is given by the composite
	$$ t_C\colon C^X\xrightarrow{\ r^X\ }B^X\xrightarrow{\ t_B\ } B^G\xrightarrow{\ e^G\ } C^G $$
	which is well defined since $t$ is interpretable over $B$. 
	
	If we are given an equation $(s=t)$ in $\EE$ then, by the arguments above we can consider the following commutative square 
	\begin{center}
		\begin{tikzpicture}[baseline=(current  bounding  box.south), scale=2]
			
			\node (z) at (-1,0) {$B^X$};
			\node (a) at (0,0) {$B^G$};
			\node (b) at (-1,-0.8) {$C^X$};
			\node (b1) at (0,-0.8) {$C^G$};

			\path[font=\scriptsize]
			
			(z) edge [->>] node [left] {$e^X$} (b)
			(a) edge [->>] node [right] {$e^G$} (b1)
			
			([yshift=-1.5pt]z.east) edge [->] node [below] {$s_B$} ([yshift=-1.5pt]a.west)
			([yshift=1.5pt]z.east) edge [->] node [above] {$t_B$} ([yshift=1.5pt]a.west)
			([yshift=-1.5pt]b.east) edge [->] node [below] {$s_C$} ([yshift=-1.5pt]b1.west)
			([yshift=1.5pt]b.east) edge [->] node [above] {$t_C$} ([yshift=1.5pt]b1.west);
		\end{tikzpicture}
	\end{center}
	where $e^X$ and $e^G$ are still split epimorphisms in $\V$. Since $s_B=t_B$ and $e^X$ is an epimorphism, it follows that also $s_C=t_C$; thus $C$ satisfies the equation $(s=t)$.
\end{proof}

As a corollary we can prove the following result. The notion of enriched factorization system that we consider below is the one treated in~\cite{lucyshyn2014enriched} (it consists of a orthogonal factorization system $(\E,\M)$ on $\V_0$ whose left class is stable under tensor product by objects of $\V$)

\begin{cor}\label{strong-closure}
	Let $\EE$ be a $\Gamma$-ary equational theory; then $\Mod(\EE)$ is closed in $\Str(\LL)$ under strong subobjects. More generally, if the maps in $\Gamma$ are contained in the left class of a proper enriched factorization system $(\E,\M)$ on $\V$, then $\Mod(\EE)$  is closed under $\M$-subobjects.
\end{cor}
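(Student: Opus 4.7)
The plan is to deduce both statements directly from Proposition~\ref{closure}(3) by showing that, under the hypotheses given, every strong (respectively, $\M$-) monomorphism $m\colon A\rightarrowtail B$ in $\Str(\LL)$ is automatically $\Gamma$-strong in the sense of the definition preceding Proposition~\ref{closure}, i.e.\ its underlying morphism in $\V$ is right orthogonal to every $e\otimes Z$ with $e\in\Gamma$ and $Z\in\V$.

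For the general statement, suppose $(\E,\M)$ is a proper enriched factorization system on $\V$ with $\Gamma\subseteq\E$, and that $m\colon A\rightarrowtail B$ is a morphism in $\Str(\LL)$ whose underlying map lies in $\M$. Since the system is proper, $\M$-maps are monomorphisms, so $m$ is a monomorphism in $\Str(\LL)$. The enrichment hypothesis says that $\E$ is stable under tensoring with arbitrary objects of $\V$; hence for every $e\in\Gamma\subseteq\E$ and every $Z\in\V$ we have $e\otimes Z\in\E$, so the $\M$-map $m$ is right orthogonal to it. Thus $m$ is $\Gamma$-strong and Proposition~\ref{closure}(3) applies.

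For the first statement, I would instantiate the above with the factorization system $\E=\{\textnormal{all epimorphisms of }\V_0\}$ and $\M=\{\textnormal{strong monomorphisms}\}$. This is a proper factorization system on the locally presentable category $\V_0$, and it is enriched because each tensoring functor $Z\otimes-\colon\V_0\to\V_0$ is a left adjoint and so preserves epimorphisms, making $\E$ stable under the monoidal action. By Assumption~\ref{assu}(1) every element of $\Gamma$ is an epimorphism, so $\Gamma\subseteq\E$, and the previous paragraph applies.

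The only delicate point is the matching between ``strong subobject in $\Str(\LL)$'' and ``$\M$-map at the level of $\V$''; but this is essentially automatic because the forgetful $U\colon\Str(\LL)\to\V$ preserves and reflects monomorphisms and limits (it is a right adjoint creating the relevant structure), so a strong monomorphism in $\Str(\LL)$ has underlying morphism a strong monomorphism of $\V$, and conversely a morphism of $\Str(\LL)$ whose underlying map is a strong mono is itself a strong mono. Beyond this bookkeeping, the argument reduces entirely to recognizing $m$ as $\Gamma$-strong and invoking Proposition~\ref{closure}(3).
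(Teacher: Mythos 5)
Your proposal is correct and follows essentially the same route as the paper: both reduce to Proposition~\ref{closure}(3) by checking that the monomorphisms in question are $\Gamma$-strong, using that $e\otimes Z$ is an epimorphism (resp.\ lies in $\E$) because $-\otimes Z$ is a left adjoint (resp.\ because the factorization system is enriched). The only cosmetic difference is that for the first statement you route through the $(\textnormal{epi},\textnormal{strong mono})$ factorization system on $\V_0$, whereas the paper argues directly from the definition of strong monomorphism; both are fine.
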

\begin{proof}
	We apply Proposition~\ref{closure} above. Every strong monomorphism is, by definition, a monomorphism and right orthogonal to every epimorphism in $\V$. In particular it is right orthogonal to every $e\otimes Z$ for $e\in \Gamma$ and $Z\in\V$. 
	
	Similarly, if $\Gamma\subseteq\E$ where $(\E,\M)$ is a proper enriched factorization system on $\V$, then every map in $\M$ is a monomorphism (by properness) and right orthogonal to morphisms of the form $e\otimes Z$, for $Z\in\Gamma$, since $\E$ is closed under the tensor product (being enriched).
\end{proof}

It follows in particular that if all the maps in $\Gamma$ are strong epimorphisms, then $\Mod(\EE)$ is closed under subobjects in $\Str(\LL)$. Thus, when $\G=\V_\lambda$ and $\Gamma$ is made of isomorphisms (so that the notion of $\Gamma$-ary term coincides with that of~\cite{RT23EUA} by Example~\ref{iso-sigma}) we recover part of~\cite[Proposition~6.1]{RT23EUA}.

\begin{es}$ $\begin{enumerate}[leftmargin=1cm]
		\item Consider $\V=\Pos$ in the context of Example~\ref{Pos}. A strong monomorphism in $\Pos$ is a order reflecting map (that is, $m\colon A\to A'$ for which $a\leq b$ if and only if $m(a)\leq m(b)$). Thus, with Corollary~\ref{strong-closure} we recover \cite[Proposition~3.22]{adamek2021finitary}.
		\item If $\V=\Met$ and we are in the setting of Example~\ref{Met}. Then $\Gamma$ is contained in the class of surjections, which is the left class of the proper enriched factorization system (surjection, isometry). Thus, by Corollary~\ref{strong-closure} we know that models are closed under isometric subobjects, as stated in~\cite[Section~4]{mardare2016quantitative}.
		\item If $\V=\omega$-$\bo{CPO}$ and we are in the setting of Example~\ref{CPO}. Then $\Gamma$ is contained in the class of dense maps. These form the left class of an enriched proper factorization whose right class is given by the join-reflecting embeddings (order reflecting maps that also reflect joins of $\omega$-chains). Thus by Corollary~\ref{strong-closure} above, the models are closed under such subobjects, as stated in~\cite[Theorem~5.2]{adamek1985birkhoff}.
	\end{enumerate}
\end{es}

\begin{cor}\label{equat->monad}
	For any $\Gamma$-ary equational theory $\EE$ the forgetful $\V$-functor $$U\colon \Mod(\EE)\to \V$$ is strictly $\lambda$-ary monadic.
\end{cor}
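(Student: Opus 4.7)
The plan is to apply the enriched Beck monadicity theorem to the factorization $U = U_\LL \circ J$, where $J\colon\Mod(\EE)\hookrightarrow\Str(\LL)$ is the inclusion and $U_\LL\colon\Str(\LL)\to\V$ is the strictly $\lambda$-ary monadic forgetful functor recalled in Section~\ref{back}. Proposition~\ref{closure} supplies the key closure properties driving the argument: $J$ preserves limits and $\lambda$-filtered colimits (items (1)-(2)), and $\Mod(\EE)$ is closed in $\Str(\LL)$ under $\V$-split quotients (item (4)).

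First, to produce a left adjoint to $U$, I would note that $\Str(\LL)$ is locally $\lambda$-presentable (being strictly $\lambda$-ary monadic over $\V$) and apply the enriched reflection theorem for accessibly embedded, limit-closed subcategories of locally presentable $\V$-categories to obtain a reflector $R\dashv J$. The desired left adjoint to $U$ is then $F':=R\circ F$, where $F\colon\V\to\Str(\LL)$ is the free-structure functor from Section~\ref{back}.

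Next, I would establish that $U$ strictly creates coequalizers of $U$-split pairs. Given $f,g\colon A\rightrightarrows B$ in $\Mod(\EE)$ whose image under $U$ admits a split coequalizer $q\colon UB\twoheadrightarrow Q$ in $\V$, strict monadicity of $U_\LL$ lifts $q$ to a unique coequalizer $\bar q\colon JB\twoheadrightarrow\bar Q$ in $\Str(\LL)$ with $U_\LL\bar q=q$. Since $q$ is split in $\V$, $\bar q$ is a $\V$-split quotient of $JB\in\Mod(\EE)$, so $\bar Q\in\Mod(\EE)$ by Proposition~\ref{closure}(4); this lifts strictly through $J$ to the coequalizer of $f,g$ in $\Mod(\EE)$, giving strict monadicity via Beck's theorem. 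Finally, $\lambda$-arity of the induced monad $T=UF'$ follows from $U$ preserving $\lambda$-filtered colimits (both $U_\LL$ and $J$ do, by the background and Proposition~\ref{closure}(2)) together with $F'$, as a left adjoint, preserving all colimits.

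The main obstacle I expect is exhibiting the left adjoint: although the underlying reflection theorem is standard, applying its enriched version comfortably requires either invoking enriched SAFT-type machinery or, as a more hands-on alternative, constructing $F'X$ directly by a transfinite ``quotient-by-judgments'' iteration on $FX\in\Str(\LL)$, which at each stage $\alpha<\lambda$ adds the morphisms needed to interpret the terms $t$ in judgments $\dw t\in\EE$ and quotients by the congruences forced by the equations of $\EE$, stabilizing by $\lambda$-presentability of the arities; verifying that each stage preserves the structure already built is the subtle part.
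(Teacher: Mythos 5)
Your proposal is correct and follows essentially the same route as the paper: factor $U$ through $\Str(\LL)$, use Proposition~\ref{closure} to get preservation of limits and $\lambda$-filtered colimits and strict creation of coequalizers of $\V$-split pairs, and reduce everything to producing the left adjoint. The only cosmetic difference is how the reflector is justified --- you cite the (enriched) reflection theorem directly, whereas the paper unfolds its proof by establishing accessibility of $\Mod(\EE)$ via closure under $\lambda$-pure subobjects and then lifting to $\V$-accessibility --- but these are the same argument in substance.
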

\begin{proof}
	We know by \cite[Proposition~5.12]{RT23EUA} that the forgetful $\Str(\LL)\to \V$ is strictly $\Lambda$-ary monadic. This, plus Proposition~\ref{closure} implies that $U\colon \Mod(\EE)\to \V$ preserves limits, $\lambda$-filtered colimits, and strictly creates coequalizers of $\V$-split (that is, $U$-split) pairs. To conclude we only need to prove that it has a left adjoint, then the result follows by the strict monadicity theorem.
	
	To show that $U$ has a left adjoint it is enough to show that $\Mod(\EE)$ is locally presentable as a $\V$-category. First note that $\Str(\LL)$ is such (\cite[Proposition~3.5]{RT23EUA}). Now, by Proposition~\ref{closure} $\Mod(\EE)$ is closed in $\Str(\LL)$ under $\lambda$-filtered colimits and $\lambda$-pure subobjects (since these are regular, and hence strong, monomorphisms \cite[Proposition~2.31]{AR94:libro}); thus the underlying  category $\Mod(\EE)_0$ is accessible and accessibly embedded in $\Str(\LL)_0$ by \cite[Corollary~2.36]{AR94:libro}. By \cite[Corollary~3.23]{LT22:virtual} then $\Mod(\EE)$ is accessible as a $\V$-category; being also complete, it is then locally presentable.
\end{proof}

\subsection{Characterization theorem}

In this section we prove that $\V$-categories of models of $\Gamma$-ary equational theories coincide with $\V$-categories of algebras of $\lambda$-ary monads. We achieve this by comparing our $\Gamma$-ary equational theories with the $\lambda$-ary ones from~\cite{RT23EUA}, and then use the main theorem therein.

\begin{lema}\label{L-toL'-structures}
	For any $\lambda$-ary language $\mathbbm L$ there is an induced $\Gamma$-ary language $\LL'$ and a $\Gamma$-ary equational $\LL'$-theory $\EE$ for which $\Str(\LL)\cong\Mod(\EE)$.
\end{lema}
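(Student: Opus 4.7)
The plan is to replace each function symbol of $\LL$ by a family of function symbols with output arities in $\G$, together with a single judgement that encodes that this family assembles to a morphism into the original target. Concretely, for each $f\colon (X,Y)$ in $\LL$ I would use Assumption~\ref{assu}(2) to fix an epimorphism $e_Y = (e_j)_{j \in J_Y}\colon \sum_{j \in J_Y} G_j \twoheadrightarrow Y$ in $\Gamma$, and then in $\LL'$ declare function symbols $f_j\colon (X,G_j)$ for each $j \in J_Y$. Since $G_j \in \G$ and $X \in \V_\lambda$, the language $\LL'$ is $\Gamma$-ary. The theory $\EE$ then consists of the judgements $\dw (f_j)_{J_Y,e_Y}$, one per $f \in \LL$; unpacking rule~(4) of Definition~\ref{Sigma-terms} and the notation for $\dw$, such a judgement is interpretable on an $\LL'$-structure $A$ exactly when the tuple $((f_j)_A)_{j \in J_Y}\colon A^X \to A^{\sum_j G_j}$ factors through $A^{e_Y}\colon A^Y \rightarrowtail A^{\sum_j G_j}$, which is a monomorphism because $e_Y$ is an epimorphism by Assumption~\ref{assu}(1).

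Next I would set up the bijection on objects. An $\LL$-structure $A$ with interpretations $f_A$ induces an $\LL'$-structure by $(f_j)_A := A^{e_j} \circ f_A$, and the corresponding $\dw$ judgement is satisfied since $f_A$ itself provides the required factorization. Conversely, a model $B$ of $\EE$ comes equipped, for each $f \in \LL$, with a unique factorization $f_B\colon B^X \to B^Y$ of the tuple $((f_j)_B)_{j}$ through $B^{e_Y}$, which endows $B$ with an $\LL$-structure. The two assignments are mutually inverse by the monicness of $B^{e_Y}$.

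Finally, for the $\V$-enriched isomorphism I would recall that the hom-objects of $\Str(\LL)$ and $\Str(\LL')$ are, by~\cite[Definition~3.3]{RT23EUA}, intersections of equalizers --- one per function symbol --- encoding the expected compatibility squares. The two families of conditions translate into each other: any compatibility square for $f\colon (X,Y)$ yields one for each $f_j$ by post-composing with $B^{e_j}$, and conversely the family of $f_j$-squares assembles into a single commutative square with $B^{\sum_j G_j}$ at the corner which, upon cancelling the mono $B^{e_Y}$, yields the $f$-square. Hence the object-bijection lifts to a $\V$-functorial isomorphism $\Str(\LL) \cong \Mod(\EE)$. The main (albeit mild) obstacle is the bookkeeping needed to match the single enriched equalizer condition for $f$ with the joint condition coming from the $f_j$'s and the $\dw$ judgement; this is handled uniformly by the monicness of $B^{e_Y}$ derived from Assumption~\ref{assu}(1), and the choice of the epimorphism $e_Y$ for each $f$ is immaterial up to this isomorphism.
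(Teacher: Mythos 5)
Your proposal is correct and follows essentially the same route as the paper: replace each $f\colon(X,Y)$ by a family $f_j\colon(X,G_j)$ indexed over a chosen $e\colon\sum_j G_j\twoheadrightarrow Y$ in $\Gamma$, impose the single judgement $\dw f_{J,e}$, and use that $A^e$ is monic to get the mutually inverse assignments $f_A\mapsto (A^{e_j}\circ f_A)_j$ and $((f_j)_A)_j\mapsto (f_{J,e})_A$. The paper states this only at the level of objects (reducing to a single function symbol), so your extra paragraph checking that the equalizer conditions on hom-objects match up is a welcome, if routine, completion of the same argument.
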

\begin{proof}
	It is enough to prove it for the case where $\LL$ consists of a single function symbol $f$ of arity $(X,Y)$. Consider $e\colon \tsum_{j\in J} G_j\twoheadrightarrow Y$ be in $\Gamma$, and define $\LL'$ to consist of function symbols $f_j:(X,G_j)$ for $j\in J$.
	Then $A$ is an $\LL$-structure if and only if it is an $\LL'$-structure that satisfies the judgement
	$$ \dw f_{J,e}$$
	for any $f\in\LL$.
	
	Indeed, an $\LL'$-structure $A$ comes with maps $(f_j)_A\colon A^X\to A^{G_j}$ and satisfies the judgement above if and only if $f_{J,e}$ is interpretable, if and only if $(f_{J,e})_A$ below exists.
	\begin{center}
		\begin{tikzpicture}[baseline=(current  bounding  box.south), scale=2]
			
			\node (a0) at (0,0.8) {$A^X$};
			\node (b0) at (1.2,0.8) {$A^Y$};
			\node (c0) at (1.2,0) {$A^{\sum_jG_j}$};
			
			\path[font=\scriptsize]
			
			(a0) edge [dashed, ->] node [above] {$(f_{J,e})_A$} (b0)
			(a0) edge [->] node [below] {$(f_j)_A\ \ \ \ \ \ $} (c0)
			(b0) edge [>->] node [right] {$A^e$} (c0);
		\end{tikzpicture}	
	\end{center}
	This say that $A$ is an $\LL$-structure with $f_A:=(f_{J,e})_A$. Conversely, every $\LL$-structure $A$ can be seen as an $\LL'$-structure by defining $(f_j)_A:=A^{e_j}\circ f_A$; this clearly satisfies the judgements above. 
\end{proof}

In the Lemma below by $(X,Y)$-ary $\LL$-term we mean one as in \cite[Definition~4.1]{RT23EUA}.

\begin{lema}\label{L-toL'-terms}
	Consider a $\lambda$-ary language $\LL$ and the induced $\LL'$ and $\EE$ as in the lemma above. 
	Given an $(X,Y)$-ary $\LL$-term $t$, with $X\in\V_\lambda$, constructed by applying the power rule only to $Z\in \G$. Then for any $e\colon\textstyle \sum_{j\in J} G_j\twoheadrightarrow Y$ in $\Gamma$ there exists a family $(t_j)_{j\in J}$ of $\Gamma$-ary $\LL'$-terms, where $t_j$ is $(X,G_j)$-ary, such that for any $A\in\Str(\LL')$\begin{itemize}[leftmargin=0.8cm]
		\item the family $t_{J,e}=(t_j)_{j\in J}$ is interpretable over $A$
		\item and $t_A= (t_{J,e})_A$.
	\end{itemize}
\end{lema}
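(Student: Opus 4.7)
The plan is to proceed by induction on the construction of $t$ as a $\lambda$-ary $\LL$-term in the sense of~\cite{RT23EUA}. It is convenient to begin by recording a \emph{transport principle}: if a family $(t_j)_{j\in J}$ realizes $t\colon(X,Y)$ along some covering $e\colon\sum_{j\in J} G_j\twoheadrightarrow Y$ in $\Gamma$, then for any other $e'\colon\sum_{k\in K}G'_k\twoheadrightarrow Y$ in $\Gamma$ the terms $r_k:=e'_k(t_{J,e})$, obtained by rule~(4) of Definition~\ref{Sigma-terms} from the $(Y,G'_k)$-ary morphism term $e'_k\colon G'_k\to Y$ (furnished by rule~(1)) and the family $(t_j)_{j\in J}$, realize $t$ along $e'$: indeed $(r_k)_A=A^{e'_k}\circ(t_{J,e})_A=A^{e'_k}\circ t_A$, and these components factor automatically through $A^{e'}$ via $t_A$. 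Consequently, in each case of the induction it is enough to produce a realization along a single, conveniently chosen covering.

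The base cases are straightforward. A morphism $f\colon Y\to X$ in $\V_\lambda$ is realized along any $e\colon\sum G_j\twoheadrightarrow Y$ in $\Gamma$ (which exists by Assumption~\ref{assu}(2)) by the terms $t_j:=f\circ e_j$ coming from rule~(1) of Definition~\ref{Sigma-terms}. A function symbol $f\colon(X,Y)$ of $\LL$ is realized along the chosen covering $e^f$ of Lemma~\ref{L-toL'-structures} by the $\LL'$-function symbols $(f_j)_{j\in J^f}$: interpretability of this family and the identity $f_A=(f_{J^f,e^f})_A$ are exactly what is asserted by the judgement $\dw f_{J^f,e^f}\in\EE$, which is satisfied under the equivalence $\Str(\LL)\cong\Mod(\EE)$ of Lemma~\ref{L-toL'-structures}. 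For the substitution rule of~\cite{RT23EUA}, given inductive realizations $(s_j)_{j\in J}$ of $s\colon(Y',Y)$ along $e\colon\sum G_j\twoheadrightarrow Y$ and $(t'_k)_{k\in K}$ of $t'\colon(X,Y')$ along some $e''\colon\sum G'_k\twoheadrightarrow Y'$ in $\Gamma$, rule~(4) of Definition~\ref{Sigma-terms} produces $r_j:=s_j(t'_{K,e''})\colon(X,G_j)$ with $(r_j)_A=(s_j)_A\circ t'_A=A^{e_j}\circ s_A\circ t'_A=A^{e_j}\circ(s(t'))_A$, delivering a realization of $s(t')$ along $e$.

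The genuinely delicate case is the power rule. Suppose $s\colon(X,Y)$ is inductively realized by $(s_j)_{j\in J}$ along $e\colon\sum G_j\twoheadrightarrow Y$, and consider $s^Z\colon(Z\otimes X,Z\otimes Y)$ with $Z\in\G$. Using Assumption~\ref{assu}(2), choose for each $j$ a covering $e^*_j\colon\sum_{i\in I_j}G_{ij}\twoheadrightarrow Z\otimes G_j$ in $\Gamma$; Assumption~\ref{assu}(5) then ensures that the composite $\hat e\colon\sum_{i,j}G_{ij}\twoheadrightarrow Z\otimes Y$ also lies in $\Gamma$. Apply rule~(3) of Definition~\ref{Sigma-terms} to form the $(Z\otimes X,G_{ij})$-ary terms $r_{ij}:=(s_j)^Z_{e^*_{ij}}$. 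Using the canonical isomorphism $A^{Z\otimes(-)}\cong(A^{(-)})^Z$ together with the inductive identity $(s_j)_A=A^{e_j}\circ s_A$, a direct computation shows $(r_{ij})_A=A^{\hat e_{ij}}\circ(s^Z)_A$, which says precisely that the family $(r_{ij})_{i,j}$ realizes $s^Z$ along $\hat e$; the transport principle then handles every other $e$. This is also the step I expect to be the main obstacle: pairing the closure property Assumption~\ref{assu}(5) of $\Gamma$ with the interaction between the power functor $(-)^Z$ and the internal-hom, so that the $\Gamma$-ary $\LL'$-term built from the inductive data really does match the $\LL$-interpretation of $s^Z$.
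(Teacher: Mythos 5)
Your overall strategy is exactly the paper's: first establish the transport principle (one realization along a single covering suffices, because rule~(4) applied to the components $e'_k$ of any other covering transports it), then induct on the four term-formation rules of~\cite{RT23EUA}. Your base cases, your use of the judgement $\dw f_{J,e}$ from Lemma~\ref{L-toL'-structures} for function symbols, and your treatment of the power rule — choosing coverings $e^*_j$ of each $Z\otimes G_j$, invoking Assumption~\ref{assu}(5) to get the composite covering of $Z\otimes Y$ in $\Gamma$, and applying rule~(3) componentwise — all coincide with what the paper does.

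The one genuine gap is in the substitution case. The substitution rule of~\cite{RT23EUA} does not take a single inner term $t'\colon(X,Y')$: it takes a $\lambda$-small \emph{family} $t_J=(t_j)_{j\in J}$ of terms with possibly \emph{different} input arities $X_j$ and output arities $Y_j$, together with $s\colon(\sum_j Y_j,W)$, producing $s(t_J)\colon(\sum_j X_j,W)$. Your argument only covers the special case $|J|=1$. In the general case two extra ingredients are needed, both of which the paper uses: (i) Assumption~\ref{assu}(4) (closure of $\Gamma$ under $\lambda$-small coproducts), so that the sum $e:=\sum_j e_j$ of the inductively given coverings of the $Y_j$ is a legitimate covering of $\sum_j Y_j$ in $\Gamma$ along which to feed the realization of $s$; and (ii) a reconciliation of the distinct input arities, precomposing each $t^j_k$ with the coproduct inclusion $X_j\to\sum_j X_j$ as in Propositions~\ref{precomp-term} and~\ref{sum-domain}, before applying rule~(4). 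Neither step is deep, but as written your proof does not handle the rule in the generality in which it is stated, so this case needs to be redone.
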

\begin{proof}
	It is enough to prove that there exists an epimorphism $e$ and a family $(t_j)_{j\in J}$ as above for such $e$. Indeed, assuming this we can use rule (4) to get a family for any $e$: suppose we have $t_{J,e}=(t_j)_{j\in J}$ satisfying the two properties above. Given another $e'=(e'_j)_{j\in J'}\colon\textstyle \sum_{j\in J'} G'_j\twoheadrightarrow Y$ in $\Gamma$ we can apply rule (4) to form the $(X,G'_j)$-ary terms
	$$ t'_j:=e'_j(t_{J,e}) $$
	for any $j\in J'$. Then, an easy calculation shows that the family $t'_{J',e'}=(t'_j)_{j\in J'}$ satisfies the same two properties as $t_{J,e}$, but with respect to $e'$,
	
	We now proceed recursively on the construction of a $\lambda$-ary $\LL$-term following the rules of~\cite[Definition~4.1]{RT23EUA} that here we number as (1')-(4'). 
	
	(1'). The term $t$ is induced by a morphism $t\colon Y\to X$ in $\V$. By definition $\Gamma$ contains a map of the form $e=(e_j)_{j\in J}\colon\textstyle \sum_{j\in J} G_j\twoheadrightarrow Y$. Then we can take the $\Gamma$-ary $\LL'$-terms $t_j:= t(e_j)$ of arity $(X,G_j)$ as in Proposition~\ref{precomp-term}. By construction, $t_{J,e}$ is interpretable over any $\LL$-structure $A$ and satisfies $t_A= (t_{J,e})_A$.
	
	(2'). The term $t$ is induced by a function symbol $f:(X,Y)$ in $\LL$. Then by definition of $\LL'$ we have $e\colon \tsum_{j\in J} G_j\twoheadrightarrow Y$ in $\Gamma$ and a family of function symbols $f_j:(X,G_j)$, for $j\in J$, that satisfies the required properties (see the proof of Lemma~\ref{L-toL'-structures} above).
	
	(3'). Assume that we are given a term $t:(X,Y)$ together with a family $t_{J,e}$ of $\LL'$-terms satisfying the required properties, and an object $Z\in\G$. We need to prove that such a family exists also for the $\LL$-term $t^G$. For each $j\in J$ consider an epimorphism
	$$ e_j=(e_{j,h})_{h\in H_j}\colon \tsum_{h\in H_j} G_{j,h}\twoheadrightarrow Z\otimes G_j $$
	in $\Gamma$. Then, by definition of $\Gamma$, the composite
	$$ q\colon \sum_{j\in J,h\in H_j} G_{j,h}\xrightarrow{\ \sum_j e_j\ } \sum_{j\in J}Z\otimes G_j\xrightarrow{\ Z\otimes e\ } Z\otimes Y $$
	is still in $\Gamma$. Moreover, we can consider the $(Z\otimes X,G_{j,h})$-ary $\LL'$-terms $s_{j,h}=(t_j)^Z_{e_j,h}$ as per rule (3). Let $J*H=\{(j,h)|\ j\in J, h\in H_j\}$, it is easy to see that $(s_{J*H,q})$ satisfies the required properties for $t^Z$. 
	
	(4'). Consider now a family  $t_J=(t_j)_{j\in J}$, where each $t_j$ is an $(X_j,Y_j)$-ary term, and $s$ a $(\tsum_j Y_j,W)$-ary term. Assume by inductive hypothesis that the thesis holds for $s$ and each $t_j$, we need to prove it for $s(t_J)$. By hypothesis we have families, $s_{H,q}=(s_h)_{h\in H}$ for $s$ and $t^j_{K_j,e_j}=(t^j_k)_{k\in K_j}$ for each $t_j$. 
	
	Define $e:=\tsum_j e_j$, which still lies in $\Gamma$, and for each $h\in H$ the term
	$$ \sigma_h:= s_h(\hat t^J_{H,e}): (\tsum_j X_j, G_h) $$
	given as in Proposition~\ref{sum-domain}, where $G_h$ is the output arity of $s_h$. Then the family $\sigma_{H,q}$ satisfies the requirements for $s(t_J)$.
\end{proof}

As a consequence we can prove the following:

\begin{teo}\label{monad->equat}
	Let $T\colon \V\to\V$ be a $\lambda$-ary monad. Then there exists a $\Gamma$-ary equational theory $\EE$ on a $\Gamma$-ary language $\LL$ together with an isomorphism $E\colon \tx{Alg}(T)\to \Mod(\EE)$ making the triangle
	\begin{center}
		\begin{tikzpicture}[baseline=(current  bounding  box.south), scale=2]
			
			\node (a0) at (0,0.8) {$\tx{Alg}(T)$};
			\node (b0) at (1.3,0.8) {$\Mod(\EE)$};
			\node (c0) at (0.65,0) {$\V$};
			
			\path[font=\scriptsize]
			
			(a0) edge [->] node [above] {$E$} (b0)
			(a0) edge [->] node [left] {$U\ $} (c0)
			(b0) edge [->] node [right] {$\ U'$} (c0);
		\end{tikzpicture}	
	\end{center} 
	commute.
\end{teo}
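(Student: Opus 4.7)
The plan is to reduce this to the analogous characterization in \cite{RT23EUA}, using the translation Lemmas \ref{L-toL'-structures} and \ref{L-toL'-terms}. First I would invoke the characterization theorem of \cite{RT23EUA} to present the $\lambda$-ary monad $T$ as the models of some $\lambda$-ary language $\mathbbm L$ and $\lambda$-ary equational theory $\mathbbm E$, yielding an isomorphism $\tx{Alg}(T)\cong\Mod(\mathbbm E)$ commuting with the forgetful functors to $\V$. Without loss of generality every term appearing in $\mathbbm E$ can be chosen to be built using the power rule only on objects of $\G$; this is the hypothesis required by Lemma \ref{L-toL'-terms}, and can be arranged since $\G$ generates $\V_\lambda$ under $\lambda$-small colimits.

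Next, Lemma \ref{L-toL'-structures} converts $\mathbbm L$ into a $\Gamma$-ary language $\LL$ together with a base $\Gamma$-ary equational theory $\EE_0$ (consisting purely of definedness judgements $\dw f_{J,e}$) for which $\Str(\mathbbm L)\cong\Mod(\EE_0)$ over $\V$. Then, for each equation $(s=t)\in\mathbbm E$ of arity $(X,Y)$, I fix some $e=(e_j)_{j\in J}\colon\tsum_{j\in J}G_j\twoheadrightarrow Y$ in $\Gamma$ and apply Lemma \ref{L-toL'-terms} to both $s$ and $t$, obtaining families $s_{J,e}=(s_j)_{j\in J}$ and $t_{J,e}=(t_j)_{j\in J}$ of $\Gamma$-ary $\LL$-terms such that on any $\mathbbm L$-structure $A$ one has $s_A=(s_{J,e})_A$ and $t_A=(t_{J,e})_A$. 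The $\Gamma$-ary equational theory $\EE$ is then defined to be $\EE_0$ augmented with the equations $(s_j=t_j)$ for each such $j\in J$ and $(s=t)\in\mathbbm E$.

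Finally, I would verify that $\Mod(\EE)\cong\Mod(\mathbbm E)$ under the bijection $\Str(\mathbbm L)\cong\Mod(\EE_0)$. For any such $A$, the condition $s_A=t_A$ is equivalent to the conjunction of $(s_j)_A=(t_j)_A$ for all $j\in J$: the forward direction is immediate since by the construction in Lemma~\ref{L-toL'-terms} each $(s_j)_A$ is obtained from $s_A$ by postcomposition with $A^{e_j}$; the converse uses that $e$ is an epimorphism in $\V$, hence $A^e$ is a monomorphism, so the collective equalities determine $s_A$ and $t_A$ uniquely through $A^e$ and force $s_A=t_A$. Composing with $\tx{Alg}(T)\cong\Mod(\mathbbm E)$ then yields the desired $E$; commutativity of the triangle follows since each intermediate equivalence is over $\V$.

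The main obstacle will be the first step: arranging the theory $\mathbbm E$ produced by \cite{RT23EUA} so that all its terms use powers only by objects of $\G$, as required by Lemma \ref{L-toL'-terms}. One approach is to write each $\lambda$-presentable $Z\in\V_\lambda$ as a $\lambda$-small colimit of objects of $\G$ and decompose any power $t^Z$ into a composite of powers $t^G$ for $G\in\G$ together with equational axioms encoding the colimit presentation; alternatively, one could inspect the proof in \cite{RT23EUA} and build $\mathbbm E$ directly from generators taken in $\G$ rather than in all of $\V_\lambda$.
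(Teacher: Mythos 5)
Your proposal follows essentially the same route as the paper: invoke \cite[Proposition~5.13]{RT23EUA} to present $\tx{Alg}(T)$ as models of a $\lambda$-ary equational theory, translate the language via Lemma~\ref{L-toL'-structures}, translate each equation into a family of $\Gamma$-ary equations via Lemma~\ref{L-toL'-terms}, and use that $A^e$ is a monomorphism to check the two theories have the same models. The obstacle you flag --- that Lemma~\ref{L-toL'-terms} only applies to terms built using the power rule on objects of $\G$ --- is a genuine subtlety that the paper's own (very terse) proof also leaves unaddressed, and your two suggested remedies (decomposing $t^Z$ along a $\lambda$-small $\G$-colimit presentation of $Z$, or arranging the theory from \cite{RT23EUA} to be built from $\G$ in the first place) are the natural ways to close it.
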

\begin{proof}
	By \cite[Proposition~5.13]{RT23EUA} there exists a $\lambda$-ary equational theory $\EE'$ over a $\lambda$-ary language $\LL$ that satisfies the property above. By Lemma~\ref{L-toL'-structures} the $\V$-category of $\LL$-structures is isomorphic to the $\V$-category of models of some $\Gamma$-ary equational theory $\EE''$ on a $\Gamma$-ary language $\LL$. Then, by Lemma~\ref{L-toL'-terms}, each equation in $\EE$ between $\lambda$-terms is equivalent to a set of equations between $\Gamma$-ary terms. Thus we can expand $\EE''$ to form an equational $\Gamma$-ary theory $\EE$ whose $\V$-category of models is exactly $\Mod(\EE')$. This is enough to conclude.
\end{proof}

And thus:

\begin{teo}\label{char-single}
	The following are equivalent for a $\V$-category $\K$: \begin{enumerate}[leftmargin=1cm]
		\item $\K\simeq\Mod(\EE)$ for a $\Gamma$-ary equational theory $\EE$;
		\item $\K\simeq\tx{Alg}(T)$ for a $\lambda$-ary monad $T$ on $\V$;
		\item $\K$ is cocomplete and has a $\lambda$-presentable and $\V$-projective strong generator $G\in\K$;
		\item $\K\simeq\lambda\tx{-Pw}(\T,\V)$ is equivalent to the $\V$-category of $\V$-functors preserving $\lambda$-small powers, for some $\V_\lambda$-theory $\T$.
	\end{enumerate}
\end{teo}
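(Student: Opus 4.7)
The plan is to observe that the equivalence $(1)\Leftrightarrow(2)$ is already essentially in hand from the work of the preceding subsection, while $(2)\Leftrightarrow(3)\Leftrightarrow(4)$ is the standard enriched Lawvere--Beck correspondence. So I would organize the argument around a short cycle and then cite the classical inputs.

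First I would handle $(1)\Rightarrow(2)$, which is immediate from Corollary~\ref{equat->monad}: if $\K\simeq\Mod(\EE)$ then the forgetful $\V$-functor $U\colon\Mod(\EE)\to\V$ is strictly $\lambda$-ary monadic, and hence $\K\simeq\tx{Alg}(T)$ for the induced $\lambda$-ary monad $T$. The implication $(2)\Rightarrow(1)$ is exactly the content of Theorem~\ref{monad->equat}, so no additional argument is needed. This already closes the loop between the first two items; the substantive new content of the theorem has been isolated in the two previous statements, and this is essentially why the rest is bookkeeping.

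Next I would treat $(2)\Leftrightarrow(3)$ via enriched Beck monadicity. Given a $\lambda$-ary monad $T$ on $\V$, the free algebra $FI$ is $\lambda$-presentable (as $T$ is $\lambda$-ary and $I\in\V_\lambda$), $\V$-projective (since $\tx{Alg}(T)(FI,-)\cong U$ creates coequalizers of $U$-split pairs, and in particular sends regular epimorphisms to regular epimorphisms), and a strong generator (as $U$ is conservative), while $\tx{Alg}(T)$ is cocomplete. Conversely, if $\K$ satisfies $(3)$, then $\K(G,-)\colon\K\to\V$ preserves $\lambda$-filtered colimits and reflects isomorphisms, has left adjoint $(-)\cdot G$, and preserves the relevant coequalizers by $\V$-projectivity of $G$; the enriched monadicity theorem then exhibits $\K$ as $\tx{Alg}(T)$ for the $\lambda$-ary monad $T:=\K(G,(-)\cdot G)$. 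The equivalence $(2)\Leftrightarrow(4)$ is the classical enriched Lawvere-theory correspondence: the $\V_\lambda$-theory associated with $T$ is the opposite of the full subcategory of $\tx{Alg}(T)$ on the free algebras $FX$ with $X\in\V_\lambda$, and the Yoneda-type equivalence $\tx{Alg}(T)\simeq\lambda\textnormal{-Pw}(\T,\V)$ identifies $T$-algebras with $\lambda$-small-power-preserving $\V$-functors out of $\T$; a convenient reference is \cite{power1999enriched,BG,LP23}, and the finitary case suffices for the argument to go through verbatim at cardinality $\lambda$.

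The hard part has already been done: the genuinely nontrivial step is $(1)\Leftrightarrow(2)$, which relies on the careful Lemmas~\ref{L-toL'-structures} and~\ref{L-toL'-terms} showing how to translate between $\lambda$-ary languages/terms in the sense of \cite{RT23EUA} and $\Gamma$-ary ones. What remains in this theorem is purely a matter of assembling the two implications into a cycle with the classical monad/theory correspondence, so I would keep the proof short, essentially reducing it to ``combine Corollary~\ref{equat->monad}, Theorem~\ref{monad->equat}, and the enriched Beck/Lawvere correspondence.''
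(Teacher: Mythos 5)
Your proposal is correct and follows essentially the same route as the paper: the equivalence $(1)\Leftrightarrow(2)$ is obtained by combining Corollary~\ref{equat->monad} with Theorem~\ref{monad->equat}, and the remaining equivalences $(2)\Leftrightarrow(3)\Leftrightarrow(4)$ are the standard enriched monad/theory correspondence, which the paper simply imports as \cite[Theorem~5.14]{RT23EUA} rather than re-deriving via Beck monadicity as you sketch. The extra detail you supply for $(2)\Leftrightarrow(3)\Leftrightarrow(4)$ is harmless but not needed; citing the prior result suffices.
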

\begin{proof}
	Put together \cite[Theorem~5.14]{RT23EUA} with Corollary~\ref{equat->monad} and Theorem~\ref{monad->equat} above.
\end{proof}

\subsection{Sound case}\label{souns-section}

We now move to the context of \cite{tendas2024more} where $\lambda$ filtered colimits are replaced by $\Phi$-flat colimits for a (weakly) sound class of weights $\Phi$.

Recall that, given a locally small class of weights $\Phi$, we say that a weight $M\colon\C\op\to\V$ is {\em $\Phi$-flat} if $M$-weighted colimits commute in $\V$ with $\Phi$-limits. The class $\Phi$ is called {\em weakly sound} if every $\Phi$-continuous weight $M\colon \C\op\to\V$, where $\C$ is $\Phi$-cocomplete $\C$, is $\Phi$-flat. See~\cite[Example~4.8]{LT22:limits} for a list of examples.

For the remainder of the section we fix a locally small and weakly sound class $\Phi$. As in \cite{tendas2024more} we denote by $\Phi\I$ the closure of the monoidal unit $I$ in $\V$ under $\Phi$-colimits. 

\begin{Def}
	A $\Gamma$-ary language $\LL$ is called {\em $(\Gamma,\Phi)$-ary} if all the function symbols in $\LL$ have domain arity in $\Phi\I$. A $\Gamma$-ary term $t$ is called {\em $(\Gamma,\Phi)$-ary} if its input arity is in $\Phi\I$. A $\Gamma$-ary equational theory $\EE$ over $\LL$ is called {\em $(\Gamma,\Phi)$-ary} if the terms appearing in it are all $(\Gamma,\Phi)$-ary.
\end{Def}

\begin{lema}\label{Phi-flat-pres}
	Let $\EE$ be a $(\Gamma,\Phi)$-ary equational theory over a $(\Gamma,\Phi)$-ary language. Then:\begin{enumerate}[leftmargin=1cm]
		\item the forgetful $\V$-functor $\Str(\LL)\to\V$ preserves $\Phi$-flat colimits;
		\item $\Mod(\EE)$ is closed in $\Str(\LL)$ under $\Phi$-flat colimits.
	\end{enumerate} 
\end{lema}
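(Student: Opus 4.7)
The plan is to lift the arguments of Lemma~\ref{limits of terms} and Proposition~\ref{closure} from $\lambda$-filtered colimits to $\Phi$-flat colimits. The single enabling fact is that for any $X\in\Phi\I$, the power functor $[X,-]\colon\V\to\V$ preserves $\Phi$-flat colimits: since $X\in\Phi\I$ is a $\Phi$-colimit of copies of $I$, the functor $[X,-]$ is expressible as a $\Phi$-limit of the identity, and by weak soundness of $\Phi$ the $\Phi$-limits commute with $\Phi$-flat colimits in $\V$. By the $(\Gamma,\Phi)$-ary hypotheses the relevant input arities lie in $\Phi\I$, so this one observation lets every step of the previous proofs go through.

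For part~(1), given a $\Phi$-flat weight $M\colon\C\op\to\V$ and a diagram $H\colon\C\to\Str(\LL)$, I would set $L:=M\star UH$ in $\V$ and equip it with an $\LL$-structure as follows. For each function symbol $f\colon(X,G)$ of $\LL$, the $(\Gamma,\Phi)$-ary hypothesis gives $X\in\Phi\I$, hence $L^X\cong M\star H(-)^X$; composing with $M\star f_{H(-)}$ and with the canonical comparison $M\star H(-)^G\to L^G$ yields $f_L\colon L^X\to L^G$. A direct verification then shows that $L$ with this structure is the weighted colimit in $\Str(\LL)$, since any cocone on $H$ in $\Str(\LL)$ descends to a cocone on $UH$ in $\V$, inducing a unique map out of $L$ that automatically respects each $f_L$ by construction.

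For part~(2), I would first establish a $\Phi$-flat analogue of Lemma~\ref{limits of terms}: if $\tau\colon(X,G)$ is a $(\Gamma,\Phi)$-ary term interpretable over each $H(c)$, then $\tau$ is interpretable over $L=M\star H$ and $\tau_L=M\star\tau_{H(-)}$. The proof is by induction on the construction of $\tau$, exactly parallel to Lemma~\ref{limits of terms}: rules~(1)--(3) are immediate from the key observation, while rule~(4) $\tau=s(t_{J,e})$ uses the inductive hypothesis on $s$ and each $t_j$ to produce the required factorization through $L^e\colon L^Y\to L^{\sum_jG_j}$, which remains a monomorphism because $e$ is epi. With this analogue in hand, closure of $\Mod(\EE)$ under $\Phi$-flat colimits in $\Str(\LL)$ follows exactly as in Proposition~\ref{closure}(2): for every judgement $\dw t$ or $(s=t)$ in $\EE$, the terms are $(\Gamma,\Phi)$-ary, so their interpretability, and equality of interpretations, on each $H(c)$ transfers to $L$.

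The main obstacle is the book-keeping in rule~(4): applying the inductive hypothesis to the subterm $s\colon(Y,G)$ requires $Y\in\Phi\I$, which is not literally guaranteed by the $(\Gamma,\Phi)$-ary condition on $\tau$ alone. The expected remedy is the natural ambient assumption that $\G\subseteq\Phi\I$ and that $\Phi\I$ is closed under the $\lambda$-small coproducts and $\Gamma$-quotients arising in term formation, so that all auxiliary arities produced along the way stay in $\Phi\I$; making this precise, and checking the compatibility of the resulting factorization $(t_{J,e})_L$ with the colimit cocone, should be the most delicate (but still routine) part of the argument.
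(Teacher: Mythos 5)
Your proposal follows the paper's route exactly: for part (1) the paper simply cites \cite[Proposition~5.11]{tendas2024more} (your explicit construction of $M\star UH$ with the induced structure, using $L^X\cong M\star H(-)^X$ for $X\in\Phi\I$, is precisely that argument), and for part (2) the paper likewise says to repeat Proposition~\ref{closure} using a $\Phi$-flat version of Lemma~\ref{limits of terms} together with commutation of $\Phi$-flat colimits with powers by the input arities appearing in $\EE$. So the approach is the same and the substance of your argument is correct.

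The ``obstacle'' you flag is a genuine subtlety, and the paper's one-line proof does not address it either: in the inductive step for rule (4) the inductive hypothesis is applied to the subterm $s\colon(Y,G)$, whereas the stated definition of $(\Gamma,\Phi)$-ary term only constrains the top-level input arity. The intended reading is that ``the input arities appearing in $\EE$'' includes the arities of all subterms occurring in the derivations of the terms of $\EE$, i.e.\ one should strengthen ``$(\Gamma,\Phi)$-ary'' recursively (note that when $s$ is a rule-(1) or rule-(2) term it is unconditionally interpretable and only commutation of powers by the top-level arity $X$ is needed, which covers most terms arising in practice). Your proposed remedy, however, is not the right fix: requiring $\G\subseteq\Phi\I$ already fails in the paper's own motivating example ($\V=\Cat$, $\Phi$ the class for finite products, where $\Phi\I$ consists of finite discrete categories but $\bb 2\in\G$), and requiring $\Phi\I$ to be closed under $\Gamma$-quotients of $\lambda$-small coproducts of objects of $\G$ would, by Assumption~\ref{assu}(2), force $\V_\lambda\subseteq\Phi\I$ and so trivialize the sound-class refinement altogether. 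Apart from this (shared) imprecision, which should be resolved at the level of the definition rather than by new ambient hypotheses, your argument is fine.
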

\begin{proof}
	Point (1) is given by \cite[Proposition~5.11]{tendas2024more}. for (2), we argue exactly as in Proposition~\ref{closure} by using Lemma~\ref{limits of terms} and that $\Phi$-flat colimits commute in $\V$ powers by the input arities appearing in $\EE$ (being $\Phi$-presentable objects).
\end{proof}

Then we prove the following result, which translates \cite[Theorem~5.13]{tendas2024more} into the context of $\Gamma$-ary equational theories.

\begin{teo}\label{char-single-Phi}
	The following are equivalent for a $\V$-category $\K$: \begin{enumerate}[leftmargin=1cm]
		\item $\K\simeq\Mod(\EE)$ for a $(\Gamma,\Phi)$-ary equational theory $\EE$;
		\item $\K\simeq\tx{Alg}(T)$ for a monad $T$ on $\V$ preserving $\Phi$-flat colimits;
		\item $\K$ is cocomplete and has a $\Phi$-presentable and $\V$-projective strong generator $G\in\K$;
		\item $\K\simeq\Phi\tx{-Pw}(\T,\V)$ is equivalent to the $\V$-category of $\V$-functors preserving $\Phi\I$-powers, for some $\Phi\I$-theory $\T$.
	\end{enumerate}
\end{teo}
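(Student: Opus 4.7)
The plan is to mirror the proof of Theorem~\ref{char-single}. The equivalences $(2)\Leftrightarrow(3)\Leftrightarrow(4)$ are already provided by \cite[Theorem~5.13]{tendas2024more}, so it only remains to close the loop with the new $\Gamma$-ary formalism of this section: namely, to establish $(1)\Rightarrow(2)$ and $(2)\Rightarrow(1)$.

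For $(1)\Rightarrow(2)$, I would essentially reuse the argument of Corollary~\ref{equat->monad}: that proof establishes strict monadicity of $U\colon\Mod(\EE)\to\V$ by exhibiting a left adjoint (via local presentability of $\Mod(\EE)$) and verifying creation of $U$-split coequalizers, neither of which uses the $\lambda$-ary hypothesis in an essential way. To upgrade the conclusion to preservation of $\Phi$-flat colimits by the induced monad $T=UF$, I would combine the two items of Lemma~\ref{Phi-flat-pres}: the forgetful $\Str(\LL)\to\V$ preserves $\Phi$-flat colimits, and $\Mod(\EE)\hookrightarrow\Str(\LL)$ is closed under them; hence $U$ preserves them, and so does $T$, since $F$ is a left adjoint.

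For $(2)\Rightarrow(1)$, I would follow the template of Theorem~\ref{monad->equat}. First, invoke the $\Phi$-flat analogue of \cite[Proposition~5.13]{RT23EUA} developed in \cite{tendas2024more} to present $\tx{Alg}(T)$ as $\Mod(\EE')$ for a $\Phi$-ary equational theory $\EE'$ over a language whose function symbols have input arity in $\Phi\I$. Then translate $\EE'$ into a $(\Gamma,\Phi)$-ary theory $\EE$ over a $(\Gamma,\Phi)$-ary language by applying Lemma~\ref{L-toL'-structures} and Lemma~\ref{L-toL'-terms} verbatim.

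The main bookkeeping obstacle will be checking that this translation preserves the $(\Gamma,\Phi)$-ary condition. The new function symbols produced in Lemma~\ref{L-toL'-structures} have arity $(X,G_j)$, thereby keeping the original input arity $X\in\Phi\I$, so the resulting $\LL'$ is indeed $(\Gamma,\Phi)$-ary. Inspecting each of the recursive rules (1')--(4') used in Lemma~\ref{L-toL'-terms}, the input arities of the constructed $\Gamma$-ary terms are $X$, $Z\otimes X$, and $\sum_{j\in J} X_j$, and these stay in $\Phi\I$ provided $\Phi\I$ is closed under the relevant $\Phi$-colimits and under tensoring with the objects of $\G$ employed as powering arities. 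These closure properties parallel those imposed on $\Gamma$ in Assumption~\ref{assu} and are natural compatibility requirements between $\G$, $\Gamma$ and $\Phi$ in the sound setting; once they are in place, the construction closes exactly as in Theorem~\ref{monad->equat}.
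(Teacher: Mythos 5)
Your proposal is correct and follows the paper's proof essentially verbatim: the equivalences $(2)\Leftrightarrow(3)\Leftrightarrow(4)$ come from \cite[Theorem~5.13]{tendas2024more}, $(1)\Rightarrow(2)$ from Corollary~\ref{equat->monad} together with Lemma~\ref{Phi-flat-pres}, and $(2)\Rightarrow(1)$ by presenting $\tx{Alg}(T)$ as the models of a $\Phi$-ary theory via \cite[Proposition~5.12]{tendas2024more} and then translating it through Lemmas~\ref{L-toL'-structures} and~\ref{L-toL'-terms}. The only divergence is your closing worry about the intermediate input arities $Z\otimes X$ and $\sum_{j}X_j$: this is moot, because a theory is $(\Gamma,\Phi)$-ary as soon as the terms actually appearing in it have input arity in $\Phi\I$, and Lemma~\ref{L-toL'-terms} assigns to an $(X,Y)$-ary term a family of $\Gamma$-ary terms with the \emph{same} input arity $X$, so no additional compatibility between $\G$, $\Gamma$ and $\Phi$ needs to be imposed.
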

\begin{proof}
	The last three equivalences are given by \cite[Theorem~5.13]{tendas2024more}. The implication $(1)\Rightarrow(2)$ follows from Corollary~\ref{equat->monad} and Lemma~\ref{Phi-flat-pres} above. 
	
	For $(2)\Rightarrow (1)$, by \cite[Proposition~5.12]{tendas2024more} there is a $\Phi$-ary equational theory $\EE'$ that satisfies $\K\simeq\Mod(\EE')$. Then by Lemma~\ref{L-toL'-terms} we can replace $\EE'$ with a $(\Gamma,\Phi)$-ary equational theory $\EE$ which has the same models (this is $(\Gamma,\Phi)$-ary because the input arities do not change).
\end{proof}

\section{The 2-categorical case}\label{2-cat-sect}

In this section we consider the case of $\V=\bo{Cat}$ with strong generator the set 
$$\G:=\{1=\{*\}, \bb 2 =\{0\to 1\}\};$$
just $\bb 2$ would have been enough, but having $1$ makes the languages more tractable, and makes it possible to eliminate part of the power rule for terms. For simplicity here we restrict only the finitary case.

Recall that $X\in\Cat$ is finitely presentable if and only if there exists a finite set $S$ of morphisms in $X$ that generates all maps of $X$ under composition (this in turn implies that $X$ has finitely many objects); it follows that we have an induced epimorphism
$$ e_S\colon\sum_{f\in S}\bb 2\to X $$
given by taking componentwise all the morphisms of $S$. The we can define $\Gamma$ to be the set of all such epimorphisms; it is easy to see that this satisfies the required closure properties.

We denote $(X,1)$-ary function symbols and terms with Latin letters $ f,t :X $
and call them {\em $X$-ary function symbols} and {\em $X$-ary terms}. Instead, we denote $(X,\bb 2)$-ary function symbols and terms with Greek letters $\sigma,\tau:X$ and call them  {\em $X$-ary 2-function symbols} and {\em $X$-ary 2-terms}. 

Every $X$-ary 2-term $\sigma$ can be written as
	$$ \sigma:\sigma_0\Rightarrow \sigma_1$$
where $\sigma_0:=j_0(\sigma)$ and $\sigma_1:=j_1(\sigma)$ are the $X$-ary terms obtained by applying the superposition rule to the terms given by the two object inclusions $j_0,j_1\colon 1\to\bb 2$. Note that if $\sigma$ is a 2-function symbol on a given language, we can assume without loss of generality (see Remark~\ref{cat-structures} below), that $\sigma_0$ and $\sigma_1$ are function symbols too. Under this, the definition of $\Gamma$-ary language becomes:

\begin{Def}
	A $\Gamma$-ary language $\LL$ amounts to:\begin{enumerate}[leftmargin=1cm]
		\item a set $\LL_1$ of function symbols $f:X$ with $X\in\Cat_f$;
		\item a set $\LL_2$ of 2-function symbols $\sigma\colon f\Rightarrow g$ of arity $X\in\Cat_f$ between function symbols $f,g\in\LL_1$ of the same arity.
	\end{enumerate}
\noindent An $\LL$-structure $A$ is the data of an object $A\in\Cat$ together with:\begin{enumerate}[leftmargin=1cm]
	\item a functor $f_A\colon A^X\to A$ for any $X$-ary $f\in\LL_1$;
	\item a natural transformation $\sigma_A\colon f_A\Rightarrow g_A\colon A^X\to A$ for any $X$-ary $\sigma\colon f\Rightarrow g$ in $\LL_2$.
\end{enumerate}
\end{Def}

\begin{obs}\label{cat-structures}
	This notion of structure differs from the one of Section~\ref{structures-and-terms} because, when interpreting a 2-function symbols $\sigma\colon f\Rightarrow g$, we are implicitly asking an $\LL$-structure $A$ to satisfy the equations
	$$ (j_0(\sigma)=f)\ \ \ \text{ and }\ \ \ (j_1(\sigma)=g).$$
	This will not change our theory in any way; however, it will make it somewhat easier to write-down examples.	 
\end{obs}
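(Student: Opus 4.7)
The plan is to justify the implicit claim of Remark~\ref{cat-structures} that the simplified definition of $\LL$-structure---where every 2-function symbol is declared as $\sigma\colon f\Rightarrow g$ with specified function symbols $f,g\in\LL_1$ as source and target---is equivalent to the general framework of Section~\ref{structures-and-terms} together with the stated equations, so that restricting to this simplified formalism loses no expressive power.

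The core observation is that, on any $\LL$-structure $A$, a 2-function symbol $\sigma$ of arity $(X,\bb 2)$ is interpreted in the general sense as a map $\sigma_A\colon A^X\to A^{\bb 2}$. By the standard adjunction in $\Cat$, such a map corresponds bijectively to a natural transformation between the two functors $A^X\to A$ obtained by post-composing with $A^{j_0}$ and $A^{j_1}$, where $j_0,j_1\colon 1\to\bb 2$ are the object inclusions. These post-composites are precisely the interpretations $j_0(\sigma)_A$ and $j_1(\sigma)_A$ of the $(X,1)$-ary terms built via rule~(4) of Definition~\ref{Sigma-terms}, applied to $\sigma$ with the chosen $j_0,j_1$ (treated as terms by rule~(1)) and with $e = 1_{\bb 2}$.

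I would then establish the equivalence of formalisms in two directions. Given a language $\LL$ in the simplified sense, form the general language $\LL'$ with the same 1-function symbols but having each $\sigma$ as a ``bare'' 2-function symbol of arity $(X,\bb 2)$ without declared source/target, and impose the $\Gamma$-ary equational theory $\EE$ consisting of the judgements $(j_0(\sigma)=f)$ and $(j_1(\sigma)=g)$ for each $\sigma\colon f\Rightarrow g$ in $\LL_2$. The observation above yields a $\V$-natural identification of simplified $\LL$-structures with models of $(\LL',\EE)$. Conversely, any general language with ``bare'' 2-function symbols $\sigma$ can be extended by adding fresh 1-function symbols $f_\sigma,g_\sigma$ of arity $X$ together with equations $(j_0(\sigma)=f_\sigma)$ and $(j_1(\sigma)=g_\sigma)$; since $f_\sigma,g_\sigma$ are then completely determined by $\sigma_A$ on every structure, the resulting $\V$-category of models is $\V$-isomorphic to the original $\Str(\LL')$. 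After this extension, $\sigma$ can be re-declared in the simplified sense as $\sigma\colon f_\sigma\Rightarrow g_\sigma$, yielding a simplified language with the same theory.

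The main obstacle will be ensuring that this translation is compatible with arbitrary $\Gamma$-ary equational theories built on top of the language, not merely with the categories of structures. This reduces to observing that the added equations are purely \emph{definitional}---each asserts that a certain term equals a newly introduced function symbol---so that any term or judgement in the extended language can be rewritten as an equivalent one in the original language by substituting back the defining terms (invoking Proposition~\ref{precomp-term} and the superposition rule to handle iterated substitutions). Because of this definitional character, equational theories transfer back and forth between the two formalisms without altering the $\V$-category of models, which establishes that the simplification in the Definition is harmless.
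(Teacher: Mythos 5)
Your argument is correct and is exactly the justification the paper leaves implicit: the paper offers no proof of this remark, but the surrounding text (the identification $\sigma_i=j_i(\sigma)$ before the definition, and Remark~\ref{sample-terms}(1)) shows the intended reasoning is precisely your identification of a functor $A^X\to A^{\bb 2}$ with a natural transformation between its composites with $A^{j_0}$ and $A^{j_1}$, plus the observation that declaring sources and targets amounts to a definitional extension. The two-way translation via fresh symbols $f_\sigma,g_\sigma$ and definitional equations is the standard way to make ``will not change our theory'' precise, and your handling of interpretability (the compatibility condition in rule (4) is automatic since $1_{\bb 2}\in\Gamma$ is an isomorphism) is sound.
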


Then, taking into account Remark~\ref{lessrules} and that we can skip rule (3) for 2-terms since it will be shown to be redundant (see Remark~\ref{power-2-term} below), the notion of term is generated as follows.

\begin{Def}\label{cat-term}
	The class of $\Gamma$-ary $\LL$-terms is then defined recursively as follows:\begin{enumerate}[leftmargin=1cm]
		\item[(1)] Given $X\in\Cat_f$, every object $x\in X$ is an $X$-ary term and every morphism $(\rho\colon x\to x')\in X$ is an $X$-ary 2-term; 
		\item[(2)] Every $X$-ary function symbol $f\in\LL_1$ is an $X$-ary term, and every $X$-ary 2-function symbol $\sigma\in\LL_2$ is an $X$-ary 2-term;
		\item[(3)] If $t$ is a $X$-ary term, then $t^{\bb 2}$ is a $(\bb 2\times X)$-ary 2-term;
		\item[(4)] Given a $Y\in\Cat_f$ and a finite set $S$ of generating morphisms for $Y$ (that is, an arrow in $\Gamma$). For a $Y$-ary 2-term (resp. term) $\sigma$, and a family  $(\tau_{x_1},\dots, \tau_{x_n})$ of $X$-ary 2-terms indexed on the elements of $S$; then $\sigma(\tau_{x_1},\dots,\tau_{x_n})$ is a $X$-ary 2-term (resp. term).
	\end{enumerate}
	
\end{Def}

\begin{nota}
	For practical purposes we will sometimes denote natural transformations $\sigma\colon f\Rightarrow g$, for $f,g\colon B\to A$, as maps $B\to A^{\bb 2}$, rather than consider their transposes.
\end{nota}

\begin{Def}
	The interpretation of a $\Gamma $-ary term over an $\LL$-structure $A$ is then defined as follows:
	\begin{enumerate}[leftmargin=1cm]
		\item[(1)] Given $X\in\Cat_f$, the $X$-ary terms $x\in X$ are always interpretable in $A$ and their interpretation is given by the evaluation functor
		$$ x_A\colon A^X\xrightarrow{\tx{ev}_x}A.$$
		Similarly, every $X$-ary 2-term $(\rho\colon x\to x')\in X$ is interpretable over $A$ and its interpretation is given by the natural transformation
		$$(\rho_A\colon x'_A\Rightarrow x_A)\colon A^X\xrightarrow{\tx{ev}_\rho} A^{\bb 2}$$
		induced by evaluating at $\rho$.
		\item[(2)] Every $X$-ary function symbol $f\in\LL_1$ is interpretable over $A$ and its interpretation is given by the map $$f_A\colon A^X\to A$$ obtained by the fact that $A$ is an $\LL$-structure. Similarly for $X$-ary 2-function symbol in $\LL_2$.
		\item[(3)] If $t$ is an $X$-ary term that is interpretable over $A$, then $t^{\bb 2}$ is interpretable over $A$ and its interpretation is given by the  natural transformation
		$$ t^{\bb 2}_A\colon A^{\bb 2\times X}\xrightarrow{\ \cong\ } (A^X)^{\bb 2}\xrightarrow{(t_A)^{\bb 2}} A^\bb 2. $$
		\item[(4)] Given $(Y,S)$ in $\Gamma$. For a $Y$-ary 2-term $\sigma$, and a family  $(\tau_{x_1},\dots \tau_{x_n})$ of $X$-ary 2-terms indexed on the elements of $S$; then 
		$\sigma(\tau_{x_1},\dots \tau_{x_n})$ is interpretable over $A$ if and only if $\sigma$ and each $\tau_{x_i}$ are interpretable over $A$, and the family $(\tau_{x_i})_A\colon A^{X}\to A^\bb 2$ assembles into a (uniquely determined) functor $(\tau_{\bar x})_A$ as below.
		\begin{center}
			\begin{tikzpicture}[baseline=(current  bounding  box.south), scale=2]
				
				\node (a0) at (0,0.8) {$A^X$};
				\node (b0) at (1.2,0.8) {$A^Y$};
				\node (c0) at (1.2,0) {$A^{\sum_jG_j}$};
				
				\path[font=\scriptsize]
				
				(a0) edge [dashed, ->] node [above] {$(\tau_{\bar x})_A$} (b0)
				(a0) edge [->] node [below] {$(\tau_{x_i})_A\ \ \ \ \ \ $} (c0)
				(b0) edge [>->] node [right] {$A^{e_S}$} (c0);
			\end{tikzpicture}	
		\end{center}
		In that case the interpretation of $\sigma(\tau_{x_1},\dots \tau_{x_n})$ is given by the composite
		$$ \sigma(\tau_{x_1},\dots \tau_{x_n})_A\colon A^{X}\xrightarrow{\ (\tau_{\bar x})_A  \ } \textstyle A^Y\xrightarrow{\ \sigma_A\ } A^{\bb 2}.$$
		The same applies when $\sigma$ is a term rather than a 2-term.
	\end{enumerate}
\end{Def}

\begin{obs}\label{sample-terms}
	The following examples of terms will be useful later on and give an idea of how interpretability works:\begin{enumerate}[leftmargin=1cm]
		\item For any $X$-ary 2-term $\sigma$ we have two $X$-ary terms $\sigma_0$ and $\sigma_1$ defined by 
		$$ \sigma_i:=j_i(\sigma) $$
		where $j_i\colon 1\to\bb 2$ picks the point $i\in\{0,1\}$. For any $\A\in\Str(\LL)$ for which $\sigma$ is interpretable, then also $\sigma_0$ and $\sigma_1$ are interpretable, and their interpretation coincides respectively with the domain and codomain of the natural transformation $\sigma_A$.
		\item Given an $X$-ary term $t$ we have an $X$-ary 2-term $1_t\colon t\Rightarrow t$ defined by 
		$$1_t:= !(t) $$
		where $!\colon \bb 2\to 1$ is the unique map. If $A\in\Str(\LL)$ we have $(1_t)_A=1_{t_A}$.
		\item Given two $X$-ary 2-terms $\sigma$ and $\tau$, we have a new $X$-ary 2-term $\sigma\circ \tau$ defined by
		$$ \sigma\circ \tau:=j_{02}(\sigma,\tau)_S $$
		where we are applying rule (4) to $Y=\mathbb 3 =\{0\to 1\to 2\}$, with generating family $S$ the two arrows $0\to 1$ and $1\to 2$, and with $j_{02}$ the $\mathbb 3$-ary 2-term given by the arrow $0\to 2$ in $\mathbb 3$. If $A\in\Str(\LL)$, then $\sigma\circ \tau$ in interpretable if both $\sigma$ and $\tau$ are, and moreover $(\tau_1)_A=(\sigma_0)_A$. In that case then $(\sigma\circ \tau)_A=\sigma_A\circ \tau_A$.
		\item Given an $X$-ary 2-term $\sigma$ we have an $X$-ary 2-term $\sigma^{-1}$ defined by
		$$ \sigma^{-1}:= j_{10}(\sigma)_{\{0\to 1\}} $$
		where we are applying rule (4) to $Y=\bb I=\{0\cong 1\}$, with generating family the arrow $0\to 1$, and with $j_{10}$ the $\bb I$-ary 2-term given by the arrow $1\to 0$ of $\bb I$. If $A\in\Str(\LL)$, then $\sigma^{-1}$ is interpretable if $\sigma$ is and $\sigma_A$ is an invertible 2-cell; in that case then $(\sigma^{-1})_A=\sigma_A^{-1}$.
	\end{enumerate}
\end{obs}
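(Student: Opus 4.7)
The plan is to verify each of the four items by directly unwinding the interpretation of rule~(4). The one conceptual ingredient, used throughout, is the pointwise identification of the comparison map $A^{e_S}\colon A^Y\hookrightarrow\prod_{f\in S}A^{\bb 2}$ for $Y\in\Cat_f$ with generating set of morphisms $S$: since $A^Y=[Y,A]$, this is precisely the inclusion of the subcategory of those tuples of arrows in $A$ that satisfy the relations defining $Y$. Hence rule~(4) lifts a family $(\tau_{x_i})_A$ through $A^{e_S}$ exactly when the given 2-cells satisfy those relations.

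Items~(1) and~(2) are immediate. For~(1), I take the outer $\bb 2$-ary term $j_i$ (rule~(1) applied to the object $i\in\bb 2$), the trivial covering $1_{\bb 2}\in\Gamma$, and the singleton family $(\sigma)$: the factorisation is vacuous and the interpretation reduces to $\tx{ev}_i\circ\sigma_A$, picking out the domain ($i=0$) or the codomain ($i=1$) of $\sigma_A$. For~(2), I invoke the general rule~(4) of Definition~\ref{Sigma-terms}, which allows the inner family to consist of terms (and not only 2-terms), instantiated on outer term $!$, trivial covering $1_1\in\Gamma$, and singleton family $(t)$; the interpretation $A^!\circ t_A$ sends every $x\in A^X$ to the identity 2-cell on $t_A(x)$, i.e.\ to $1_{t_A}$.

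For~(3) I take the outer $\bb 3$-ary 2-term $j_{02}$ given by the arrow $0\to 2$ of $\bb 3$ and apply rule~(4) along $e_S\colon\bb 2+\bb 2\twoheadrightarrow\bb 3$. By the global observation $A^{e_S}\colon A^{\bb 3}\hookrightarrow A^{\bb 2}\times A^{\bb 2}$ is the inclusion of composable pairs, so the factorisation of $(\sigma_A,\tau_A)$ through $A^{e_S}$ exists iff $(\tau_1)_A=(\sigma_0)_A$; in that case the unique lift $A^X\to A^{\bb 3}$ sends $x$ to the composable pair $(\tau_A(x),\sigma_A(x))$, and post-composition with $(j_{02})_A=\tx{ev}_{0\to 2}$ returns the vertical composite $\sigma_A\circ\tau_A$.

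Item~(4) follows the same template with $Y=\bb I$ in place of $\bb 3$: here $A^{e_S}\colon A^{\bb I}\hookrightarrow A^{\bb 2}$ is the inclusion of the full subcategory on invertible arrows, so the lift of $\sigma_A$ exists iff $\sigma_A$ is pointwise invertible, and the outer 2-term $j_{10}$ — corresponding to the reverse arrow of $\bb I$ — contributes $\tx{ev}_{1\to 0}$, producing the componentwise inverse $\sigma_A^{-1}$. No step is technical; the only recurring check is the pointwise identification of $A^{e_S}$, which is transparent once one recalls that powers in $\Cat$ compute functor categories.
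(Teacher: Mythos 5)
Your verification is correct and is exactly the direct unwinding of the interpretation rules that the paper implicitly relies on (the remark is stated without proof). The key points — that $A^{e_S}\colon A^{Y}\to\prod A^{\bb 2}$ identifies $A^Y$ with the tuples of arrows satisfying the relations of $Y$ (composable pairs for $\bb 3$, invertible arrows for $\bb I$), and that items (1) and (2) use the identity coverings $1_{\bb 2},1_1\in\Gamma$ guaranteed by Assumption~\ref{assu}(3) together with the general rule (4) of Definition~\ref{Sigma-terms} — are all handled correctly.
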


\begin{obs}\label{power-2-term}
	Here we explain why in the definition of term we do not need to apply the power rule to 2-terms. Given an $X$-ary 2-term $\tau\colon t\Rightarrow s$ the power terms generated by it are those of the form $t^{\bb 2}_h$ for any (non trivial) $h\colon \bb 2\to \bb 2\times \bb 2$. Then it is easy to see that:\begin{enumerate}[leftmargin=1cm]
		\item if $h$ picks the arrow $(0,0)\to(0,1)$, then $\tau_h^{\bb 2}\equiv t^\bb2$;
		\item if $h$ picks the arrow $(1,0)\to(1,1)$, then $\tau_h^{\bb 2}\equiv s^\bb2$;
		\item if $h$ picks the arrow $(0,0)\to(1,0)$, then $\tau_h^{\bb 2}\equiv \tau(\pi_0)$ --- see below;
		\item if $h$ picks the arrow $(0,1)\to(1,1)$, then $\tau_h^{\bb 2}\equiv \tau(\pi_1)$ --- see below;
		\item if $h$ picks the arrow $(0,0)\to(1,1)$, then $\tau_h^{\bb 2}$ is equivalent to the composite (in the sense of Example~\ref{sample-terms}(3) above) of the terms in point (1) and (4) (or equivalently, (2) and (3)).
	\end{enumerate}
	The terms $\tau(\pi_i)$, for $i=0,1$, are obtained by applying Proposition~\ref{precomp-term} to the morphisms $\pi_i\colon X\to \bb 2\times X$ in $\Cat_f$ which send $x\in\ X$ to $(i,x)\in\bb 2\times X$.
	Thus, it follows that all the power terms listed above are redundant, as their equivalent reformulation on the right-hand-side does not use the power rule on 2-terms.
\end{obs}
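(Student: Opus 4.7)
The plan is to verify each of the five stated equivalences directly by unfolding the interpretation of $\tau^{\bb 2}_h$ given by the power rule (Definition~\ref{Sigma-terms}(3)) and matching it against the interpretation of the claimed equivalent term on an arbitrary $\LL$-structure $A$. For this, I would fix $\tau\colon t\Rightarrow s$ an $X$-ary 2-term with interpretation $\tau_A\colon t_A\Rightarrow s_A\colon A^X\to A$, and note that $(\tau^{\bb 2}_h)_A = A^h\circ(\tau_A)^{\bb 2}\colon A^{\bb 2\times X}\to A^{\bb 2\times\bb 2}\to A^{\bb 2}$. The key observation is that, under the canonical isomorphism $A^{\bb 2\times X}\cong(A^X)^{\bb 2}$, an object corresponds to a morphism $\alpha\colon f\to g$ in $A^X$, and $(\tau_A)^{\bb 2}(\alpha)\in A^{\bb 2\times\bb 2}$ is the naturality square of $\tau_A$ at $\alpha$, with top edge $t_A(\alpha)$, bottom edge $s_A(\alpha)$, left edge $\tau_A(f)$, right edge $\tau_A(g)$, and diagonal the common composite $\tau_A(g)\circ t_A(\alpha) = s_A(\alpha)\circ\tau_A(f)$.

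For each of cases (1)--(4), the functor $A^h$ projects the square onto the corresponding edge. For (1) and (2) this produces $t_A(\alpha)$ and $s_A(\alpha)$, matching the interpretations of $t^{\bb 2}$ and $s^{\bb 2}$ respectively by the power rule applied to the $1$-ary terms $t$ and $s$. For (3) and (4) it produces $\tau_A(f)$ and $\tau_A(g)$, which by Proposition~\ref{precomp-term} agree with the interpretations of $\tau(\pi_0)$ and $\tau(\pi_1)$ (precomposition with $A^{\pi_i}$ extracts the component of $\tau_A$ at the $i$-th endpoint of $\alpha$). For case (5), $A^h$ extracts the diagonal of the naturality square; on the other hand, the 2-terms $t^{\bb 2}\colon t(\pi_0)\Rightarrow t(\pi_1)$ and $\tau(\pi_1)\colon t(\pi_1)\Rightarrow s(\pi_1)$ are composable in the sense of Example~\ref{sample-terms}(3), and the interpretation of $\tau(\pi_1)\circ t^{\bb 2}$ at $\alpha$ is exactly $\tau_A(g)\circ t_A(\alpha)$. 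The dual composite $s^{\bb 2}\circ\tau(\pi_0)$ provides the other factorization, equal by naturality of $\tau_A$.

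What remains in each case is the bookkeeping that the interpretability conditions on the two sides coincide: in (1)--(4) both sides are interpretable precisely when $\tau$ is interpretable over $A$, while in (5) the composability hypothesis of Example~\ref{sample-terms}(3) (namely $(\tau(\pi_1))_0 = t(\pi_1) = $ the codomain of $t^{\bb 2}$) holds automatically since both 2-terms entering the composite arise from $\tau$ itself. I expect the main (though entirely bureaucratic) obstacle to lie in carefully tracking the orientation of the five non-identity arrows of $\bb 2\times\bb 2$ together with the canonical iso $(A^X)^{\bb 2}\cong A^{\bb 2\times X}$ and its compatibility with $A^h$; no conceptual difficulty is expected.
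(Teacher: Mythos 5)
Your proposal is correct and is exactly the verification the paper leaves implicit (the remark offers no proof beyond ``it is easy to see''): unfolding $(\tau^{\bb 2}_h)_A=A^h\circ(\tau_A)^{\bb 2}$, identifying $(\tau_A)^{\bb 2}(\alpha)$ with the naturality square of $\tau_A$ at $\alpha$, and reading off the five edges, with the composability condition in case (5) holding automatically since $(\tau(\pi_1))_0\equiv t(\pi_1)\equiv(t^{\bb 2})_1$. The only care needed is the coordinate convention on $\bb 2\times\bb 2$ coming from the isomorphism $(A^{\bb 2})^{\bb 2}\cong A^{\bb 2\times\bb 2}$, which you correctly flag and which at worst permutes cases (1)--(4) in pairs.
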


\begin{es}
	Let $\LL$ be the empty language, so that $\Str(\LL)=\Cat$. Consider the $\bb 2$-ary 2-term $1_\bb 2$ induced by the identity on $\bb 2$. Then a category $A$ satisfies 
	$$ \dw 1_{\bb 2}^{-1} $$
	if and only if every morphism of $A$ is invertible, if and only if $A$ is a groupoid.  
\end{es}

\begin{es}
	Over the empty language consider the $\bb 2$-ary term $ !:=(1_\bb 2)_{\{\tx{id}\}}$,where $\{\tx{id}\}$ is a generating family for the singleton category $1$. Then a category $A$ satisfies 
	$$\dw\ !$$ 
	if and only if every morphism in it is an identity morphism, if and only if it is discrete.
\end{es}

In the next example we treat the case of categories with specified limits (or colimits) of a given shape. When defining the 2-function symbols of the language we use terms that are constructed starting from the 1-dimensional function symbols (see for instance the codomains of the 2-function symbols in (b) and (c) below). This should simply be interpreted as a shortcut: it replaces and additional 1-dimensional function symbol, and an additional equation between that and the term in question.

\begin{es}[Categories with specified (co)limits]\label{limits}
	Given a category $D\in\Cat_f$, we will now present an equational theory for the 2-category of small categories with chosen $D$-limits and functors preserving them on the nose (this can be easily generalized to the case of colimits, or when $C$ is not finitely presentable using an infinitary theory).
	
	Given $D\in\Cat_f$, fix a finite set $S_D$ of generating morphisms for it. Consider the category $0*D$ obtained by adding an initial object to $D$; then the set $S_D\cup\{0_d\colon 0\to d \}_{d\in D}$ is a finite generating set for $0*D$. Similarly, we can consider the category $\bb 2*D$ obtained by adding an arrow $0'\to 0$ to $0*D$.
	
	Let $\LL$ be the language consisting of:\begin{itemize}[leftmargin=0.8cm]
		\item[(a)] a function symbol $$\lim$$ of arity $D$;
		\item[(b)] for any $d\in\D$, a 2-function symbol $$\tx{pr}_d\colon \lim\Rightarrow i_d$$ of arity $D$, where $i_d\colon 1\to D$ picks the object $d\in D$;
		\item[(c)] a 2-function symbol $$\rho\colon i_0\Rightarrow \lim(\iota_D)$$ of arity $0*D$, where $i_0\colon 1\to 0*D$ and $\iota_D\colon D\to 0*D$ are the inclusions.
	\end{itemize}
	Over this language consider the theory $\EE$ defined by:\begin{enumerate}[leftmargin=1cm]
		\item The judgement
		$$ \dw (\tx{pr}_d,f)_{\{d\in D,\ f\in S_D\}} $$
		indexed over the generating family of $0*D$ introduced above.
		\item The equations
		$$ \tx{pr}_d(\iota_D)\circ \rho=0_d $$
		for any $d\in D$, where: $0_d$ is the 2-term given by the arrow $0\to d$ in $0*D$, and $\circ$ is the operation defined in Example~\ref{sample-terms}(3).
		\item Finally, for any $d\in D$ consider the $(\bb 2*D)$-ary 2-term $(\tx{pr}_d\circ 1_\bb2)$ constructed in a similar way to that of Example~\ref{sample-terms}(3). Then define the family
		$$ \tx{pr}\circ 1_\bb2 := (\tx{pr}_d\circ 1_\bb2,f)_{\{d\in D,\ f\in S_D\}}$$
		indexed over the generating family of $0*D$. Then $\EE$ contains the equation
		$$ \rho(\tx{pr}\circ 1_\bb2)=\iota_\bb2 $$
		where $\iota_\bb2$ is the 2-term corresponding to the arrow $0'\to 0$ in $\bb 2*D$.
	\end{enumerate}
	Given a category $A$ together with a choice of $D$-limits, then we have a functor
	$$ \textstyle\lim_A\colon A^D\to A$$
	that associated to any diagram of shape $D$ its chosen limit, together with natural transformations
	$$ (\tx{pr}_d)_A\colon \textstyle\lim_A\Rightarrow A^{i_d}\colon A^D\to A $$
	giving the components of the limit cone associated to a diagram, as well as a natural transformation
	$$ \rho_A\colon (i_0)_A\Rightarrow \textstyle\lim_A\circ A^{\iota_D}\colon A^{0*D}\to A $$
	that associates to any cone over a diagram of shape $D$, the unique map into the limit. This makes $A$ into an $\LL$-structure. As such, $A$ satisfies the judgement in (1) since the components of the limit cone form indeed a cone over the diagram. It satisfies (2) since, given a cone, composing the induced map into the limit with the limit cone, gives back the cone we started with. And finally $A$ satisfies (3) since, given any morphism $g$ into $\textstyle\lim_A(H)$, the image through $\rho_A$ of the cone obtained by composing $g$ with the limit cone, is $g$ itself (by uniqueness of the factorization). Thus $A\in\Mod(\EE)$. It is easy to see that conversely any model of $E$ induces a choice of $D$-limits (where (2) expresses the fact the $\rho$ gives a factorization through the limit cone, and (3) says that such factorization is unique).
\end{es}

\subsection{Epimorphisms and strong monomorphisms in $\Cat$}$ $

The characterization of epimorphisms in $\Cat$ can be found in~\cite{isbell1968epimorphisms}; that of strong monomorphisms is probably all folklore. In this section we recall such notions ellipticity as they will be needed to prove the Birkhoff variety theorem of Section~\ref{birk}.

\begin{Def}
	Consider a functor $f\colon A\to B$ in $\Cat$. We define $\overline{f(A)}$ to be the smallest subcategory of $B$ such that:\begin{itemize}[leftmargin=0.8cm]
		\item $f(a)\in \overline{f(A)}$ for any $a\in A$, and $f(h)\in\overline{f(A)}$ for any $h\colon a\to a'$ in $A$;
		\item if $k\colon b\to b'$ is in $\overline{f(A)}$ and $k$ is invertible in $B$, then $k^{-1}\in \overline{f(A)}$. 
	\end{itemize}
\end{Def}

\begin{prop}
	Let $f\colon A\to B$ be a morphism in $\Cat$; then:\begin{enumerate}[leftmargin=1cm]
		\item $f$ is an epimorphism if and only if $\overline{f(A)}=B$;
		\item $f$ is a strong monomorphism if and only if it is injective on objects, faithful, and conservative.
	\end{enumerate}
	The (epi, strong mono) factorization of a morphism $f\colon A\to B$ is given by 
	\begin{center}
		\begin{tikzpicture}[baseline=(current  bounding  box.south), scale=2]
			
			\node (a0) at (0,0.6) {$A$};
			\node (b0) at (1.6,0.6) {$B$};
			\node (c0) at (0.8,1.1) {$\overline{f(A)}$};
			
			\path[font=\scriptsize]
			
			(a0) edge [->] node [below] {$f$} (b0)
			(a0) edge [->>] node [above] {$e\ \ $} (c0)
			(c0) edge [>->] node [above] {$\ \ m$} (b0);
		\end{tikzpicture}	
	\end{center}
	where $e$ is the codomain restriction of $f$, and $m$ is the inclusion of $\overline{f(A)}$ into $B$.
\end{prop}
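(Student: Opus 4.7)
The plan is to prove (1) and (2) separately and then use them to read off the factorization. Throughout I would exploit the fact that $\overline{f(A)}$ is built as a countable tower $D_0\subseteq D_1\subseteq\dots$ where $D_0=f(A)$ and $D_{n+1}$ is the subcategory of $B$ generated by $D_n$ together with the $B$-inverses $k^{-1}$ of those morphisms $k\in D_n$ that are invertible in $B$.

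For (1), the implication ``$\overline{f(A)}=B\Rightarrow f$ is epi'' is direct: given $g,h\colon B\to C$ with $gf=hf$, the subcategory $E\subseteq B$ where $g$ and $h$ coincide contains $f(A)$, and since $g$ and $h$ are functors the image of an iso is determined by the image of its inverse, so $E$ is closed under $B$-inverses of its invertible morphisms; hence $E\supseteq\overline{f(A)}=B$. For the converse I would argue by contrapositive: form the pushout $P:=B\sqcup_{\overline{f(A)}}B$ in $\Cat$, observe that the two canonical legs $i_0,i_1\colon B\rightrightarrows P$ agree on $\overline{f(A)}\supseteq f(A)$, and verify that they differ on any chosen morphism of $B\setminus\overline{f(A)}$. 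This last verification is the main obstacle of the whole proposition, as it requires an explicit enough description of pushouts in $\Cat$ to ensure that nothing outside $\overline{f(A)}$ gets collapsed --- essentially the $\Cat$-specialization of Isbell's zigzag argument.

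For (2), monos in $\Cat$ are exactly the injective-on-objects and faithful functors (test the mono property against $1$ and $\bb 2$). For ``$\Rightarrow$'', conservativity of a strong mono $m$ follows by orthogonal lifting against the epimorphism $\bb 2\twoheadrightarrow\bb I$ (sending the arrow of $\bb 2$ to the iso of $\bb I$, which is epi by the already-proved direction of (1)): any $\phi\colon a\to a'$ in $A$ with $m\phi$ invertible in $B$ yields a commutative square against $m$, and the diagonal filler $\bb I\to A$ exhibits $\phi$ as invertible. For ``$\Leftarrow$'', given $e\colon X\twoheadrightarrow Y$ epi and a commutative square, I would build the diagonal $d\colon Y\to A$ by induction along the tower $Y=\bigcup_n D_n$ of $\overline{e(X)}=Y$: on $D_0=e(X)$ set $d(e\psi):=g\psi$, well-defined on objects because $m$ is injective on objects and on morphisms because $m$ is faithful; and on $D_{n+1}$ extend by $d(k^{-1}):=d(k)^{-1}$, noting that $d(k)$ is invertible in $A$ since $m(d(k))=h(k)$ is invertible in $B$ and $m$ is conservative. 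Functoriality, together with the equations $de=g$ and $md=h$, follow level by level, and uniqueness of $d$ is automatic.

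For the factorization, $e\colon A\to\overline{f(A)}$ is the corestriction of $f$ and $m\colon\overline{f(A)}\hookrightarrow B$ is the inclusion, so $f=me$. The functor $m$ is injective on objects, faithful, and conservative directly from the defining closure property of $\overline{f(A)}$, hence a strong mono by (2). For $e$ being epi, by (1) it suffices to verify that $\overline{e(A)}=\overline{f(A)}$ \emph{computed inside} $\overline{f(A)}$, which reduces to the observation that a morphism of $\overline{f(A)}$ is invertible there iff it is invertible in $B$ --- once more an immediate consequence of the closure property.
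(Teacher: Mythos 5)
Your overall structure is sound and parts of it genuinely diverge from the paper. For item (1) the paper does not prove anything: it simply cites Isbell's \emph{Epimorphisms and dominions III} for the characterization. Your easy direction (the subcategory on which $g$ and $h$ agree contains $f(A)$ and is closed under inverting morphisms that are invertible in $B$, hence contains $\overline{f(A)}$) is correct and worth having. For the hard direction you have correctly located the crux --- showing that the cokernel pair $B\sqcup_{\overline{f(A)}}B$ does not identify the two copies of anything outside $\overline{f(A)}$ --- but you leave it undone, and this \emph{is} the entire content of Isbell's theorem (the computation of dominions in $\Cat$ via zigzags). So as a self-contained argument your proof of (1) has a genuine hole; as it happens, it is exactly the hole the paper also declines to fill, settling for a citation. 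You should either carry out the pushout analysis or cite the result.

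For item (2) the forward direction matches the paper exactly (mono gives injective-on-objects and faithful; orthogonality against $\bb 2\twoheadrightarrow\bb I$ gives conservative). The converse is where you take a genuinely different route: you verify the unique-lifting property against an arbitrary epimorphism $e\colon X\to Y$ directly, building the diagonal by induction along the tower exhibiting $\overline{e(X)}=Y$, using injectivity on objects and faithfulness of $m$ for well-definedness on the image and conservativity to invert at each successor stage. The paper instead takes the abstract (epi, strong mono) factorization $A\xrightarrow{q}C\xrightarrow{m}B$ of $f$, applies (1) to get $\overline{q(A)}=C$, and observes that $\overline{f(A)}\cong A$ when $f$ is injective on objects, faithful, and conservative, forcing $q$ to be invertible. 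Your version is longer but more explicit and does not presuppose the existence of the factorization; the paper's is shorter but leans on (1) twice. One small repair to your induction: $D_0$ should be the subcategory \emph{generated} by $e(X)$, since the set-theoretic image of a functor need not be closed under composition; the extension of $d$ to forced composites is still well defined by faithfulness of $m$, so nothing breaks. Your treatment of the factorization itself agrees with the paper's.
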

\begin{proof}
	The final part will follow from the first two points since the codomain restriction $A\to \overline{f(A)}$ of $f$ is an epimorphism, and the inclusion $\overline{f(A)}\to B$ is injective on objects, faithful, and conservative.
	
	Point (1) is in \cite{isbell1968epimorphisms}. 
	For (2), if $f$ is a strong monomorphism then it is injective on objects and faithful (since it is a monomorphism) and is conservative since it is (by definition) right orthogonal to the epimorphism
	$$ \bb 2:=\{0\to 1\} \longrightarrow \bb I:=\{0 \cong 1 \}. $$
	Assume now that $f$ is injective on objects, faithful, and conservative; consider its (epi, strong mono) factorization given by $q\colon A\to C$ and $m\colon C\to B$. Then by (1) we have $\overline{q(A)}=C$, and (since $m$ is a strong mono) also $\overline{q(A)}\cong \overline{f(A)}$. But it is easy to see that, since $f$ is injective on objects, faithful, and conservative, then $\overline{f(A)}\cong A$. It follows that $\overline{q(A)}\cong A$ and hence that $q$ is an isomorphism. Thus $f$ is a strong monomorphism.
\end{proof}

\subsection{Free structures}\label{free-struct}$ $

Let us fix a $\Gamma$-ary language $\LL$ over $\Cat$, we shall now give an explicit construction of the free $\LL$-structure on $X\in\Cat_f$. The notion of equivalence between terms mentioned below is the one from Definition~\ref{equivalent}.

\begin{Def}
	Given $X\in\Cat_f$, denote by $FX$ the category which has:\begin{itemize}[leftmargin=0.8cm]
		\item[(i)] {\em objects}: the set of $X$-ary terms $s$ which are interpretable over any $\LL$-structure, quotient by the equivalence relation $\equiv$ between terms;
		\item[(ii)] {\em arrows}: the set of $X$-ary 2-terms $\sigma$ which are interpretable over any $\LL$-structure, quotient by the equivalence relation $\equiv$ between 2-terms;
	\end{itemize}
	where an arrow $[\sigma]$ has domain $[\sigma_0]$ and codomain $[\sigma_1]$. Identities and composition are defined as in Remark~\ref{sample-terms}
\end{Def}

The next step is to make $FX$ into an $\LL$-structure. Before doing so we need to set some notation. 

\begin{nota}\label{functor->term}
	Given a $Y$-ary (2-)term $\sigma$ in $\LL$, a functor $h\colon Y\to FX$, and a finite family $S_Y=\{y_i\}_{i\leq n}$ of generating morphisms for $Y$, we consider the $X$-ary (2-)term
	$$ \sigma(h):= \sigma(h(y_1),\cdots,h(y_n)) $$
	where, for each $i\leq n$, we have chosen a representative of the morphism $[h(y_i)]$ in $FX$.
\end{nota}

Given any $Y$-ary function symbol $f\in\LL$ we define the interpretation of $f$ as the functor
$$ f_{FX}\colon FX^Y\to FX $$
sending an object $h\colon Y\to FX$ to $f_{FX}(h):=[f(h)]$, and an arrow $\eta\colon h_0\Rightarrow h_1\colon Y\to FX$ to $f_{FX}(\eta):=[f^{\bb 2}(\eta)]$
where we see $\eta$ as a map $\eta\colon \bb 2\times Y\to FX$. \\
Similarly, given any $Y$-ary 2-function symbol $\sigma\in\LL$ with define the interpretation of $\sigma$ as the functor
$$ \sigma_{FX}\colon FX^Y\to FX^{\bb 2} $$
sending an object $h\colon Y\to FX$ to $\sigma_{FX}(h):=[\sigma(h)]$, and an arrow $\eta\colon h_0\Rightarrow h_1\colon Y\to FX$ to the commutative square (that is, an arrow in $FX^{\bb 2}$)
\begin{center}
	\begin{tikzpicture}[baseline=(current  bounding  box.south), scale=2]
		
		\node (a0) at (0,0.8) {$\bullet$};
		\node (b0) at (1,0.8) {$\bullet$};
		\node (c0) at (0,0) {$\bullet$};
		\node (d0) at (1,0) {$\bullet$};
		
		\path[font=\scriptsize]
		
		(a0) edge [->] node [above] {$[\sigma(h_0)]$} (b0)
		(a0) edge [->] node [left] {$[\sigma(\pi_0)(\eta)]$} (c0)
		(b0) edge [->] node [right] {$[\sigma(\pi_i)(\eta)]$} (d0)
		(c0) edge [->] node [below] {$[\sigma(h_1)]$} (d0);
	\end{tikzpicture}	
\end{center} 
where, again, we see $\eta$ as a map $\bb2\times Y\to FX$, and to define the 2-terms $\sigma(\pi_i)$ we follow the notation of Remark~\ref{power-2-term}. \\
It is easy to see that such assignments are functorial and don't depend on the choices made in Notation~\ref{functor->term}; thus $FX\in\Str(\LL)$ with the structure defined above.

Note that there is a functor $\eta_X\colon X\to UFX$ which sends objects and arrows of $X$ to the corresponding (equivalence classes of) $X$-ary terms and 2-terms as per Definition~\ref{cat-term}(1).

\begin{teo}\label{free-structures-teo}
	Let $\LL$ be a language over $\Cat$, and $X\in\Cat_f$. Then the $\LL$-structure $FX$ is the value at $X$ of the left adjoint to the forgetful 2-functor $U\colon\Str(\LL)\to\Cat$; the unit of the adjunction is given by the functor $\eta_X\colon X\to UFX$ defined above.
\end{teo}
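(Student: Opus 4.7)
The plan is to proceed in three stages: first, verify that $FX$ is a well-defined $\LL$-structure; second, establish the universal property on objects of the hom-categories; and third, lift it to 2-cells via a standard reduction to $A^{\bb 2}$.

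For the first stage, a routine induction on Definition~\ref{cat-term} shows that the interpretations $f_{FX}$ and $\sigma_{FX}$ descend to the quotient by $\equiv$ and are independent of the representatives chosen in Notation~\ref{functor->term}. Functoriality of these assignments and the remaining axioms making $FX$ into an $\LL$-structure follow in the same inductive manner, using Remark~\ref{sample-terms}(2),(3) for identities and composition.

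For the second stage, I would fix $A \in \Str(\LL)$ and a functor $k \colon X \to UA$ (equivalently, an object of $A^X$), and define
\[ \bar k\colon FX \to A, \qquad \bar k([s]) := s_A(k), \qquad \bar k([\sigma]) := \sigma_A(k), \]
for all $X$-ary terms $s$ and 2-terms $\sigma$ which are interpretable over every $\LL$-structure. The definition of $\equiv$ makes $\bar k$ independent of representatives, and functoriality follows from Remark~\ref{sample-terms}(2),(3). To verify that $\bar k$ is a morphism of $\LL$-structures, I would prove, for every $Y$-ary function symbol $f$ in $\LL$, the identity $\bar k \circ f_{FX} = f_A \circ \bar k^Y$ of functors $FX^Y \to A$; evaluated at $h \colon Y \to FX$, the two sides unwind to $f(h)_A(k)$ and $f_A(\bar k \circ h)$ respectively. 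These agree because the functor $\bar k \circ h \colon Y \to A$ sends each generating morphism $y_i$ of $Y$ to $h(y_i)_A(k)$, so by the substitution clause (rule (4) of the interpretation) it coincides with the factorization used to define $f(h)_A(k)$; the analogous identity for 2-function symbols is handled identically. The compatibility $U\bar k \circ \eta_X = k$ is immediate from $\eta_X(x) = [x]$ and $x_A(k) = k(x)$. For uniqueness, any $g \colon FX \to A$ in $\Str(\LL)$ with $Ug \circ \eta_X = k$ agrees with $\bar k$ on the image of $\eta_X$ (i.e. on objects and morphisms arising from rule (1)), and then the compatibility of $g$ with every $f_A$, $\sigma_A$, and with $\Cat$-powers propagates the equality along Definition~\ref{cat-term} by induction.

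Finally, the 2-dimensional universality follows from the observation that $A^{\bb 2}$ carries a pointwise $\LL$-structure whose morphisms $FX \to A^{\bb 2}$ in $\Str(\LL)$ are exactly 2-cells in $\Str(\LL)(FX, A)$; applying the object-level universal property to $A^{\bb 2}$ converts any 2-cell $\theta \colon k \Rightarrow k'$ in $\Cat(X, UA)$ into a unique 2-cell $\bar\theta \colon \bar k \Rightarrow \bar{k'}$. The main obstacle will be the inductive compatibility check in the second stage: in particular for rule (4), the partial interpretability of $\sigma(\tau_{x_1}, \dots, \tau_{x_n})$ is encoded by a factorization through the monomorphism $A^{e_S}$, and one must verify that $\bar k$ respects such a factorization. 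This is handled by post-composing with $A^{e_S}$, reducing the check to the inductive hypothesis on the components $\tau_{x_i}$ and using that $A^{e_S}$ is monic.
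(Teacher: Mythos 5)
Your proposal is correct and follows essentially the same route as the paper: reduce the enriched universal property to a bijection of underlying sets via powers (your explicit use of $A^{\bb 2}$ is exactly the paper's remark that $\Str(\LL)$ has powers preserved by $U$), define $\hat k$ by evaluating interpretations at $k$, check compatibility with function symbols through the factorization over the monomorphism $A^{e_S}$, and get uniqueness by induction on the term-formation rules. No substantive differences.
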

\begin{proof}
	Pre-composition by $\eta_X$ induces a map
	$$ \Str(\LL)(FX,A)\longrightarrow\Cat(X,UA); $$
	we need to prove that it is an isomorphism. Since $\Str(L)$ has powers and $U$ preserves them, it is enough to show that this is a bijection of sets. Or equivalently, that for each $k\colon X\to UA$ there exists a unique morphism of $\LL$-structures $\hat k\colon FX\to A$ for which $U(\hat k) \circ \eta_X= k$. 
	
	Fix a functor $k\colon X\to UA$, we construct a morphism of $\LL$-structures $\hat k\colon FX\to A$ by setting
	$$ \hat k([s]):=s_A(k) \ \ \ \  \text{ and } \ \ \ \ \hat k([\sigma]):=\sigma_A(k) $$
	for all $X$-ary terms $s$ and 2-terms $\sigma$ that represent the objects and morphisms of $FX$. This is independent from the choice of the representatives by definition of equivalence between terms, and is functorial by how composition of terms and interpretations are defined. We need to show that $\hat k$ is a morphism of $\LL$-structures; meaning that for any (2-)function symbol $s\in\LL$ the equality
	$$ \hat k\circ s_{FX}= s_A\circ \hat k^Y $$
	holds (for a 2-function symbol we replace $\hat k$ on the left with $\hat k^\bb 2$, the proof does not change). On the one hand, given $h\colon Y\to FX$ we know that
	\begin{equation}\label{morofL-str}
		\hat k\circ s_{FX}(h)= \hat k([s(h)])=(s(h))_A(k)
	\end{equation}
	where $s(h)$ is the $Y$-ary term defined as in Notation~\ref{functor->term}. On the other hand
	$$ s_A\circ \hat k^Y(h)= s_A(\hat k\circ h); $$
	but $\hat k\circ h\colon Y\to A$ is, by definition of $\hat k$, the value at $k$ of
	$$ A^X\xrightarrow{\ h(\bar y)_A\ }A^Y $$
	and this, post-composed with $s_A$, gives exactly $(s(h))_A(k)$. Thus the equality~\ref{morofL-str} holds on objects; the same strategy works for morphisms simply by seeing them as maps $\eta\colon\bb 2\times Y\to FX$.
	
	Finally, note the equality $U(\hat k) \circ \eta_X= k$ holds by definition of $\eta_X$, and $\hat k$ is the only one with this property: any morphism of $\LL$-structures $FX\to A$ is completely determined by the assignment of the variable terms in Definition~\ref{cat-term}(1); indeed, the interpretation of every other term in $FX$ is determined by the variable terms and from it being a morphism of $\LL$-structures.
\end{proof}

\subsection{Birkhoff variety theorem}\label{birk}$ $

In this final section we characterize the full subcategories of $\Str(\LL)$ that arise as 2-categories of models of equational theories under certain closure properties.

\begin{nota}\label{epi->term}
	Consider an epimorphism of the form $e\colon FX\to Z$ in $\Cat$, with $X\in\Cat_f$ and domain the underlying category of $FX\in\Str(\LL)$. \begin{itemize}[leftmargin=0.8cm]
		\item[(i)] For any morphism $f\in Z$ fix a finite family $([\sigma_1],\cdots,[\sigma_n])$ of morphisms in $FX$ (represented by $X$-ary 2-terms) such that $f$ can be written by composing and inverting the image of those through $e$. Let $\tau^e_f$ be the $X$-ary 2-term obtained from $(\sigma_1,\cdots,\sigma_n)$ by applying the same composition and inversion rules (in the same order) in the sense given by Remark~\ref{sample-terms}(3,4).
		\item[(ii)] Consider an $Y$-ary (2-)term $\sigma$ over $\LL$ and a functor $h\colon Y\to Z$. Fix a finite family $S_Y=\{y_i\}_{i\leq n}$ of generating morphisms for $Y$, we can then consider the $X$-ary (2-)term
		$$ \sigma(h):= \sigma(\tau^e_{hy_1},\cdots,\tau^e_{hy_n}) $$
		where, for each $i\leq n$, we have used the terms introduced in (i) above.
	\end{itemize}

\end{nota}

\begin{teo}\label{birk-theo}
	Let $\LL$ be a finitary language. The following are equivalent for a full subcategory $\A$ of $\Str(\LL)$:\begin{enumerate}[leftmargin=1cm]
		\item $\A=\Mod(\EE)$, for some finitary equational theory $\EE$ on $\LL$;
		\item $\A$ is closed un $\Str(\LL)$ under products, powers, strong subobjects, $\V$-split quotients, and filtered colimits.
	\end{enumerate}
\end{teo}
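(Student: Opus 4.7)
The implication $(1)\Rightarrow(2)$ is immediate from Proposition~\ref{closure} and Corollary~\ref{strong-closure} applied to $\Mod(\EE)$: the listed closure properties are all covered there (products and powers as small limits, strong subobjects via the enriched proper (epi, strong mono) factorization system on $\Cat$, $\V$-split quotients, and filtered colimits). The real content lies in the converse $(2)\Rightarrow(1)$.

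For $(2)\Rightarrow(1)$, the plan is to let $\EE$ be the set of all finitary judgements $\dw t$ and $(s=t)$ satisfied by every $A\in\A$, so that $\A\subseteq\Mod(\EE)$ holds tautologically, and then to produce, for each $X\in\Cat_f$, a structure $F^\A X\in\A$ that doubles as the free $\Mod(\EE)$-structure on $X$. Concretely, I would consider the canonical morphism
\[
p_X\colon FX\longrightarrow\prod_{(A,h)}A
\]
indexed over (a representative set of) pairs $(A,h)$ with $A\in\A$ and $h\colon X\to UA$, whose $(A,h)$-component is the adjoint transpose $\hat h$, and factor it via the (epi, strong mono) factorization in $\Str(\LL)$ as $FX\twoheadrightarrow F^\A X\rightarrowtail\prod_{(A,h)}A$. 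By closure under products and strong subobjects, $F^\A X\in\A$, and since the epi step is left cancellable $F^\A X$ is the $\A$-reflection of $FX$.

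The key verification is that $F^\A X$ is in fact the free $\Mod(\EE)$-structure on $X$. Using the term-wise description of $FX$ from Theorem~\ref{free-structures-teo}, the map $FX\twoheadrightarrow F^\A X$ identifies two $X$-ary (2-)terms $s,t$ precisely when $s_A=t_A$ for every $A\in\A$, i.e.\ precisely when $(s=t)\in\EE$. Any morphism $FX\to B$ with $B\in\Mod(\EE)$ must therefore respect this identification, and so factors uniquely through $FX\twoheadrightarrow F^\A X$; this yields $\Mod(\EE)(F^\A X,B)\cong\Str(\LL)(F^\A X,B)\cong\Cat(X,UB)$. For arbitrary $X\in\Cat$, I would write $X$ as a filtered colimit of its finitely presentable subcategories $X_i$ and set $F^\A X:=\colim_i F^\A X_i$, which lies in $\A$ by closure under filtered colimits and retains the universal property by the standard colimit-of-adjoints argument.

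To conclude, given $B\in\Mod(\EE)$, the counit $\epsilon_B\colon F^\A(UB)\to B$ of the induced adjunction $F^\A\dashv U$ between $\Cat$ and $\Mod(\EE)$ is split over $\Cat$ via the triangle identity $U\epsilon_B\circ\eta_{UB}=1_{UB}$, and so $\epsilon_B$ is a $\V$-split quotient in $\Str(\LL)$; since $F^\A(UB)\in\A$ and $\A$ is closed under $\V$-split quotients, $B\in\A$. The principal obstacle is the third paragraph: matching the semantically defined $\A$-reflection of $FX$ with the syntactically defined quotient imposed by $\EE$, which hinges essentially on the explicit term-description of Theorem~\ref{free-structures-teo}; one must also handle the set-theoretic size of the indexing class of pairs $(A,h)$, for which the accessibility of $\A$ inherited from $\Mod(\EE)$ suffices.
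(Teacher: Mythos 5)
Your overall architecture matches the paper's: the paper also reduces $(2)\Rightarrow(1)$ to showing that $\A$ is the orthogonality class of the reflection units $\gamma_X\colon FX\to F'X$ for $X\in\Cat_f$, and then concludes by exhibiting each $B\in\Mod(\EE)$ as a $\V$-split quotient of $F'(UB)\in\A$; it merely obtains the reflection from the abstract Birkhoff theorem of Manes rather than from your product-plus-image construction, and writes down an explicit theory rather than the maximal one. The genuine gap is in your third paragraph, precisely where you locate ``the principal obstacle''. In $\Cat$ an epimorphism need not be surjective on morphisms: by Isbell's characterization (recalled in the paper), $F^\A X$ is the closure of the image of $FX$ under composition \emph{and inversion}, so it contains morphisms that are only formal composites of images of $2$-terms and of inverses of such images. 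Hence a functor $UFX\to UB$ that ``respects the identifications'' made by $UFX\to UF^\A X$ need \emph{not} factor through it: one must additionally know that every $2$-term whose image becomes invertible in $F^\A X$ has invertible interpretation in $B$, and that the relevant composites are defined in $B$. This is exactly the role of the interpretability judgements $\dw \tau^{e}_f$ of Notation~\ref{epi->term}, built from the composition and inversion terms of Remark~\ref{sample-terms}, and it is why the paper's theory contains judgements $\dw t$ and not only equations. Your maximal $\EE$ does contain these judgements, so the argument is repairable, but ``must therefore respect this identification, and so factors uniquely'' is not a proof of the factorization.

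Two further points. First, even granting a factorization $G\colon UF^\A X\to UB$ in $\Cat$, you must verify that $G$ is a morphism of $\LL$-structures; this does not follow by cancelling the epimorphism $FX\to F^\A X$, because the identity $G\circ f_{F^\A X}=f_B\circ G^Y$ sits after an application of $(-)^Y$, and powers do not preserve epimorphisms in $\Cat$. The paper devotes axioms (3)--(6) of its theory, and the corresponding half of its proof, to exactly this; again your maximal $\EE$ contains the needed equations, but the verification has to be carried out. Second, your claim that $FX\twoheadrightarrow F^\A X$ identifies $[s]$ and $[t]$ iff $s_A=t_A$ for all $A\in\A$ conflates objectwise agreement (which is what the map into $\prod_{(A,h)}A$ detects) with equality of the functors $s_A,t_A\colon A^X\to A$; the two coincide only after invoking closure under powers, since morphisms of $A^X$ are objects of $(A^{\bb 2})^X$. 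The size of the indexing class $(A,h)$ is handled by passing to a representative set of strong quotients of $FX$, not by accessibility of $\A$, which is part of what you are trying to prove.
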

\begin{proof}
	$(1)\Rightarrow (2)$ follows from Proposition~\ref{closure} and Corollary~\ref{strong-closure}. Assume conversely that $\A$ is closed un $\Str(\LL)$ under products, powers, strong subobjects, and filtered colimits. Then, by the abstract Birkhoff theorem~\cite[Chapter 3, 3.4]{manes2012algebraic}, the ordinary inclusion $J_0\colon\A_0\to\Str(\LL)_0$ has a left adjoint $L_0$ and for any $X\in\Cat$, the unit $\gamma_X\colon FX\to F'X:=J_0L_0FX$ is an epimorphism in $\Cat$ (where $F$ is the functor taking free $\LL$-structures). Since $\A$ is closed under powers it follows that the left adjoint is actually enriched. 
	
	Now we will see that $\A$ is the (ordinary) orthogonality class defined by the set $\{\gamma_X\}_{X\in\Cat_f}$; that is, given an $\LL$-structure $A$, then  $A\in\A$ if and only if 
	$$ \Str(\LL)_0(\gamma_X,A)\colon \Str(\LL)_0(F'X,A)\to \Str(\LL)_0(FX,A) $$
	is a bijection for any $X\in\Cat_f$ (the stronger enriched property actually holds, but the underlying one will be enough). On the one hand, if $A\in\A$ then $\Str(\LL)(\gamma_X,A)$ is a bijection for any $X\in\Cat_f$ by the universal property of the adjunction. \\
	Conversely, given $A$ that is orthogonal to any $\gamma_X$, for $X\in\Cat_f$, then $A$ is also orthogonal with respect to $\gamma_Y$ for any $Y\in\Cat$; indeed, since $J$ and $L$ preserve filtered colimits and $Y\cong \textstyle\colim_i X_i$ is a filtered colimit of finitely presentable objects, then $\gamma_Y\cong\textstyle\colim_i\gamma_{x_i}$, and thus $\Str(\LL)_0(\gamma_Y,A)$ is a limit of bijections, an therefore a bijection itself. Now, let $Y=UA$ be the underlying object of the $\LL$-structure $A$. Then, $A$ is a $\V$-split quotient of $F(UA)$ through the counit $F(UA)\to A$ (with the splitting in $\V$ induced by $\eta_{UA}$). Since $A$ is orthogonal with respect to $\gamma_{UA}$ it follows that it is also a $\V$-split quotient of $F'(UA)$, which lies in $\A$; thus $A\in\A$ by closure under $\V$-split quotients. 

	Next, using how each $FX$ is explicitly defined and how epimorphisms are presented in $\Cat$, we define the equational theory $\EE$ as follows: for any $X\in\Cat_f$\begin{enumerate}[leftmargin=1cm]
		\item If $\sigma,\tau$ are $X$-ary (1- or 2-)terms such that $\gamma_X([\sigma])=\gamma_X([\tau])$, then
		$$(\sigma=\tau)\in\EE.$$
		\item For any morphism $f\in F'X$ we set 
		$$ \dw \tau^{\gamma_X}_f\in\EE, $$
		where we follow Notation~\ref{epi->term}(i). 
		\item Given a function symbol $f\in\LL$ of arity $Y\in\Cat_f$, for any $h\colon Y\to F'X$ in $\Cat$ we consider the $X$-ary term $f(h)$. Let now $t_h$ be an $X$-ary term such that $\gamma_X([t_h])= f_{F'X}(h)$; then we set
		$$ (f(h)= t_h) \in\EE.$$ 
		\item Given a function symbol $f\in\LL$ of arity $Y\in\Cat_f$, for any $\eta\colon h\Rightarrow h'\colon Y\to F'X$ in $\Cat$ we consider the $X$-ary 2-term 
		$f^\bb 2(\eta)$, where we see $\eta$ as a map $\bb 2\times Y\to F'X$.
		Then
		$$ (f^\bb 2(\eta)= \tau^{\gamma_X}_{f_{F'X}(\eta)}) \in\EE.$$  
		\item Given a 2-function symbol $\sigma\in\LL$ of arity $Y\in\Cat_f$, for any $h\colon Y\to F'X$ in $\Cat$ we consider the $X$-ary 2-term $\sigma(h)$. Then
		$$ (\sigma(h)= \tau^{\gamma_X}_{\sigma_{F'X}(h)}) \in\EE.$$ 
		\item Given a 2-function symbol $\sigma\in\LL$ of arity $Y\in\Cat_f$, for any $\eta\colon h\Rightarrow h'\colon Y\to F'X$ in $\Cat$ we consider the $X$-ary 2-term $\sigma_i(\eta)$, where $\sigma_i:=\sigma(\pi_i)$ as in Remark~\ref{power-2-term} and again we see $\eta$ a s a map $\bb 2\times Y\to F'X$. Then
		$$ (\sigma_i(\eta)= \tau^{\gamma_X}_{(\sigma_i)_{F'X}(\eta)}) \in\EE.$$ 
	\end{enumerate}
	We will show that an $\LL$-structure $A$ satisfies (1) and (2) if and only if each $FX\to A$ factors through $\gamma_X$ as a morphism in $\Cat$; the factorization is necessarily unique since $\gamma_X$ is an epimorphism. In addition, $A$ satisfies (3)-(6) if and only if the factorization is a morphism of $\LL$-structures. This is enough since it implies that $A\in\Mod(\EE)$ if and only if it is orthogonal with respect to $\gamma_X$ for any $X\in\Cat_f$, if and only if $A\in\A$. 
	
	Consider an $\LL$-structure $A$ in $\Mod(\EE)$ and a morphism of $\LL$-structures $\hat g\colon FX\to A$. By Theorem~\ref{free-structures-teo}, there exists a unique $g\colon X\to A$ in $\Cat$ such that $\hat g\circ \eta_X= g$; moreover, we have $g([\sigma])=\sigma_A(g)$ for any (2-)term $\sigma$ representing an object or morphism of $FX$. 
	
	Now we define a functor $G\colon F'X\to A$ as follows:\begin{itemize}[leftmargin=0.8cm]
		\item if $z\in F'X$, fix an $X$-ary term $t$ such that $\gamma_X([t])=z$ and define
		$$ G(z):=t_A(g); $$
		\item if $f$ is a morphism in $F'X$, then define
		$$ G(f):= (\tau_f^{\gamma_X})_A(g). $$
	\end{itemize} 
	This is well defined and does not depend on the choice of the representatives since $A$ satisfies the equations in (1) and (2). Moreover, by construction we have that $G\circ\gamma_X=\hat g$ in $\Cat$, and such $G$ is unique because $\gamma_X$ is an epimorphism. Thus we are only left to prove that $G$ is a morphism of $\LL$-structures.
	
	Fix a function symbol $f\in \LL$ of arity $Y$; we need to show that for any $h\colon Y\to F'X$ we have
	\begin{equation}\label{1-L-strc}
		G\circ f_{F'X}(h)=f_A\circ G^Y(h)
	\end{equation}
	and that the same holds for $\eta\colon h\Rightarrow h'\colon Y\to A$ in place of $h$. On the one hand we know that
	$$ G\circ f_{F'X}(h)= (t_h)_A(g), $$
	following the definition of $G$ and the notation of axiom (3). For the other, note that the interpretation of the $X$-ary term $f(h)$ over $A$ is by definition the top composite
	\begin{center}
		\begin{tikzpicture}[baseline=(current  bounding  box.south), scale=2]
			
			\node (a0) at (0,0.8) {$A^X$};
			\node (b0) at (1.2,0.8) {$A^Y$};
			\node (c0) at (1.2,0) {$\prod_i A^{\bb2}$};
			\node (d0) at (2,0.8) {$A$};
			
			\path[font=\scriptsize]
			
			(a0) edge [dashed, ->] node [above] {$(\tau^{\gamma_X}_h)_A$} (b0)
			(a0) edge [->] node [below] {$(\tau^{\gamma_X}_{hy_i})_A\ \ \ \ \ \ $} (c0)
			(b0) edge [>->] node [right] {$A^e$} (c0)
			(b0) edge [->] node [above] {$f_A$} (d0);
		\end{tikzpicture}	
	\end{center}
	where $(\tau^{\gamma_X}_h)_A$ exists (and is unique) because $f(h)$ is interpretable over $A$. Moreover, $(\tau^{\gamma_X}_h)_A(g)= G\circ h$, since by definition $(G\circ h)(y_i)=(\tau^{\gamma_X}_{hy_i})_A(g)$. It follows that
	$$ f_A\circ G^Y(h)= f_A(G\circ h)= f_A(\tau^{\gamma_X}_h)_A(g)=(f(h))_A(g). $$
	Since $A$ satisfies the equation in (3) this implies that condition~\ref{1-L-strc} holds. Arguing in the same way for a morphism $\eta\colon h\Rightarrow h'$ in $A^Y$, it follows from the equation in (4) that the same property holds also for $\eta$ in place of $h$, so that $G$ respects the interpretation of $f$.
	
	Similarly, using that $A$ satisfies (5) and (6) the same arguments can be applied for a 2-function symbol $\sigma\in\LL$. Therefore if $A\in\Mod(\EE)$ then it is orthogonal with respect to $\gamma_X$ for any $X\in\Cat_f$. Conversely, it is clear from the explicit calculation given above that if $A$ is orthogonal with respect to  the $\gamma_X$ then it satisfies all the equations of $\EE$. 
\end{proof}

This can be easily generalised to the case of Section~\ref{souns-section} where a sound class of limits $\Phi$ is considered. Below we take $\Phi$ to be contained in the class of limit limits since we have only treated finitary languages in this section; nonetheless, the result can be proved more generally.

\begin{teo}
	Let $\Phi$ be a locally small and weakly sound class of finite limits. Given a $\Phi$-ary language $\LL$, the following are equivalent for a full subcategory $\A$ of $\Str(\LL)$:\begin{enumerate}[leftmargin=1cm]
		\item $\A\cong\Mod(\EE)$, for some $\Phi$-ary equational theory $\EE$ on $\LL$;
		\item $\A$ is closed un $\Str(\LL)$ under products, powers, strong subobjects, $\V$-split quotients, and $\Phi$-flat colimits.
	\end{enumerate}
\end{teo}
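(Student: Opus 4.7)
The implication $(1)\Rightarrow(2)$ is immediate: Lemma~\ref{Phi-flat-pres} gives closure under $\Phi$-flat colimits, while Proposition~\ref{closure} and Corollary~\ref{strong-closure} handle small limits (in particular products and powers), $\V$-split quotients, and strong subobjects (using that the maps in $\Gamma$ are strong epimorphisms in $\Cat$).

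For $(2)\Rightarrow(1)$ I would follow the blueprint of Theorem~\ref{birk-theo} with two systematic substitutions: replace $\Cat_f$ by $\Phi\I$ and filtered colimits by $\Phi$-flat colimits. The first step is to apply the abstract Birkhoff theorem of~\cite{manes2012algebraic} to obtain an ordinary reflection $L_0\dashv J_0\colon\A_0\hookrightarrow\Str(\LL)_0$; the unit $\gamma_X\colon FX\to F'X:=J_0L_0FX$ is then an epimorphism in $\Cat$ for every $X$, and closure of $\A$ under powers promotes the reflection to an enriched one.

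The second step is to show that $\A$ is the orthogonality class cut out by $\{\gamma_X\mid X\in\Phi\I\}$. One direction is just the universal property of the adjunction. For the other, given $A$ orthogonal to every such $\gamma_X$, I would write an arbitrary $Y\in\Cat$ as a $\Phi$-flat colimit $Y\cong\colim_i X_i$ with $X_i\in\Phi\I$. Since $\A$ is closed under $\Phi$-flat colimits, both $J$ and $L$ preserve them, so $\gamma_Y\cong\colim_i\gamma_{X_i}$; consequently $\Str(\LL)(\gamma_Y,A)$ is a limit of bijections, hence a bijection. Taking $Y=UA$ and splitting the counit $F(UA)\to A$ by means of $\eta_{UA}$ then shows that $A$ is a $\V$-split quotient of $F'(UA)\in\A$, so $A\in\A$ by closure.

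Finally, the equational theory $\EE$ is constructed exactly as in the proof of Theorem~\ref{birk-theo}, but with $X$ now ranging over $\Phi\I$ rather than $\Cat_f$: equations between $X$-ary (2-)terms identified by $\gamma_X$, judgements $\dw\tau^{\gamma_X}_f$ witnessing the composition and inversion data defining each morphism of $F'X$, and the four families of equations forcing the induced functor $F'X\to A$ to be a morphism of $\LL$-structures. Theorem~\ref{free-structures-teo} still applies since $\Phi\I\subseteq\Cat_f$, and the verification that an $\LL$-structure satisfies these axioms if and only if it is orthogonal to every $\gamma_X$ with $X\in\Phi\I$ transcribes verbatim. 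The main technical point, and essentially the only departure from the finitary proof, is the orthogonality-extension argument of the previous paragraph; it relies crucially on the preservation of $\Phi$-flat colimits by the reflection, which is exactly what closure of $\A$ under such colimits affords.
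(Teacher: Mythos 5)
Your proposal is correct and follows essentially the same route as the paper, which itself simply transcribes the proof of Theorem~\ref{birk-theo} with $\Cat_f$ replaced by the $\Phi$-presentable arities and filtered colimits replaced by $\Phi$-flat colimits. The points you single out --- the orthogonality-extension step via $\gamma_Y\cong\colim_i\gamma_{X_i}$ and the fact that the construction of $\EE$ and Theorem~\ref{free-structures-teo} carry over because $\Phi$ consists of finite limits --- are exactly the substitutions the paper intends.
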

\begin{proof}
	We argue as in the proof of Theorem~\ref{birk-theo}. For $(1)\Rightarrow (2)$, the closure properties are given by Proposition~\ref{closure} and Lemma~\ref{Phi-flat-pres}. 
	
	Assume now that $\A$ is closed un $\Str(\LL)$ under products, powers, strong subobjects, and $\Phi$-flat colimits. Then, the same proof of Theorem~\ref{birk-theo} shows that $\A$ is the (ordinary) orthogonality class defined by the set $\{\gamma_X\}_{X\in\Cat_\Phi}$ (one simply replaces filtered colimits with $\Phi$-flat colimits). Then one argues as in the second part of the proof by taking judgements and equations with arity in $\Cat_\Phi$. 
\end{proof}

\begin{obs}
	When $\Phi=\tx{Fp}$ is the sound class for finite products, we obtain a Birkhoff variety theorem characterizing the 2-category of models of $\tx{Fp}$-ary equational theories; that is, of those equational theories involving only discrete arities. These, by Theorem~\ref{char-single-Phi}, correspond to the 2-categories of algebras of strongly finitary monads on $\Cat$.
	In this context another 2-categorical Birkhoff theorem was proved in~\cite{dostal2016two}; however, this does not seem comparable to ours as we, indirectly, consider the (epi, mono) factorization system on $\Cat$ while in~\cite{dostal2016two} they work on the (b.o.o.+full, faithful) factorization system. 
\end{obs}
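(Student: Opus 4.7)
The observation makes two verifiable claims (the comparison with \cite{dostal2016two} is commentary, not a claim requiring proof): first, that Fp-ary equational theories are exactly those whose input arities are finite discrete categories; second, that their 2-categories of models coincide with the 2-categories of algebras of strongly finitary monads on $\Cat$. The plan is to specialize the general sound-class framework already developed.

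First I would describe $\tx{Fp}\I$ explicitly inside $\Cat$. Since $\tx{Fp}$ is the sound class whose weighted limits are finite products, $\tx{Fp}\I$ is the closure of the terminal $1\in\Cat$ under finite products. Discrete categories are closed under finite products in $\Cat$ and every finite discrete category is a finite power of $1$, so $\tx{Fp}\I$ is precisely the subcategory of finite discrete categories. Consequently an $\tx{Fp}$-ary language consists of function symbols whose input arities are finite sets (viewed as discrete categories), and likewise for the terms that can appear in an $\tx{Fp}$-ary equational theory $\EE$: these are exactly the equational theories "involving only discrete arities", as claimed.

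Second I would apply Theorem~\ref{char-single-Phi} with $\Phi=\tx{Fp}$. The equivalence $(1)\Leftrightarrow(2)$ there states that $\V$-categories of models of $\tx{Fp}$-ary equational theories are exactly $\V$-categories of algebras of monads on $\V=\Cat$ preserving $\tx{Fp}$-flat colimits. By definition $\tx{Fp}$-flat colimits are those commuting with finite products in $\Cat$, namely (enriched) sifted colimits; and a 2-monad on $\Cat$ preserves sifted colimits iff it is strongly finitary (equivalently, is the left Kan extension of its restriction to finite discrete categories). This yields the stated identification with strongly finitary monads. The Birkhoff-style characterization is then the immediate specialisation of the preceding $\Phi$-ary Birkhoff theorem to $\Phi=\tx{Fp}$: full subcategories of $\Str(\LL)$ arising as $\Mod(\EE)$ for such an $\EE$ are exactly those closed under products, powers, strong subobjects, $\V$-split quotients, and sifted colimits.

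The potential subtlety, and the only step that is not a direct quotation of prior results, is the identification "preserves $\tx{Fp}$-flat colimits $=$ strongly finitary" on $\Cat$; this rests on the standard enriched fact that $\tx{Fp}$-flat weights on $\Cat$ coincide with sifted weights, together with the characterisation of strongly finitary 2-functors as those preserving sifted colimits. The final sentence about \cite{dostal2016two} requires no proof: the two Birkhoff theorems rely on genuinely different factorization systems on $\Cat$ --- (epi, strong mono) here, versus (bijective-on-objects-and-full, faithful) there --- so the closure conditions cut out incomparable classes of 2-categories, and this incomparability is simply noted.
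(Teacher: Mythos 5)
Your proposal is correct and follows the same route the paper intends: the remark is a direct specialization of Theorem~\ref{char-single-Phi} and the preceding $\Phi$-ary Birkhoff theorem, with the only non-quoted ingredient being the standard identification of $\tx{Fp}$-flat colimits with sifted colimits and of sifted-colimit-preserving endofunctors of $\Cat$ with strongly finitary ones. One small slip: $\Phi\I$ is defined as the closure of the unit under $\Phi$-\emph{colimits}, so for $\Phi=\tx{Fp}$ it is the closure of $1$ under finite \emph{coproducts} (copowers), not finite products --- a finite power of the terminal category is again terminal; your conclusion that $\tx{Fp}\I$ consists of the finite discrete categories is nevertheless the right one.
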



\end{document}